\newtheorem{thm}{Theorem}[section]
\theoremstyle{plain}
\newtheorem{lem}[thm]{Lemma}
\newtheorem{prop}[thm]{Proposition}
\theoremstyle{definition}
\newtheorem{defn}[thm]{Definition}
\newtheorem{example}[thm]{Example}
\theoremstyle{remark}
\newtheorem{question}[thm]{Question}
\newtheorem*{thma}{{\bf Theorem A}}
\newtheorem*{thmb}{{\bf Theorem B}}
\definecolor{A}{rgb}{.75,1,.75}
\numberwithin{equation}{section}
\newcommand{\ds}{\displaystyle}
\newcommand{\ch}{\text{ch}}
\newcommand{\C}{\mathbb C}
\newcommand{\Z}{\mathbb Z}
\newcommand{\N}{\mathbb N}
\newcommand{\Q}{\mathbb Q}
\newcommand{\mf}{\mathfrak}
\newcommand{\n}{\mathfrak{n}}
\newcommand{\h}{\mathfrak{h}}
\newcommand{\q}{\mathfrak{q}}
\newcommand{\g}{\mathfrak{g}}
\newcommand{\Hom}{\text{Hom} }
\newcommand{\al}{\alpha}
\newcommand{\mc}{\mathcal}
\newcommand{\Cl}{\mathcal{C}_n}
\newcommand{\HC}{\mathcal{H}_n}
\newcommand{\la}{\lambda}
\newcommand{\ga}{\gamma}
\newcommand{\Ga}{\Gamma}
\newcommand{\La}{\Lambda}
\newcommand{\ev}[1]{{#1}_{\bar{0}}}
\newcommand{\od}[1]{{#1}_{\bar{1}}}
\begin{document}

\title[Spin Kostka polynomials]{Spin Kostka polynomials}
\author[Wan and Wang]{Jinkui Wan and Weiqiang Wang}
%\thanks{Partially supported by NSF grant DMS-0800280.}
\address{
(Wan) Department of Mathematics, Beijing Institute of Technology,
Beijing, 100081, P.R. China. } \email{wjk302@gmail.com}

\address{(Wang) Department of Mathematics, University of Virginia,
Charlottesville,VA 22904, USA.}
\email{ww9c@virginia.edu}

\begin{abstract}
We introduce a spin analogue of Kostka polynomials and show that these
polynomials enjoy favorable properties parallel to the  Kostka
polynomials. Further connections of spin Kostka polynomials with
representation theory are established.
\end{abstract}

%\subjclass[2000]{Primary: 20C30, 20C25. Secondary: 05A19, 05E05}
\keywords{Kostka polynomials, symmetric groups, Schur $Q$-functions,
Hall-Littlewood functions, $q$-weight multiplicity, Hecke-Clifford
algebra}

\maketitle

\section{Introduction}

\subsection{}

The Kostka numbers and Kostka(-Foulkes) polynomials are ubiquitous
in algebraic combinatorics, geometry, and representation theory. A
most interesting property of Kostka polynomials is that they have
non-negative integer coefficients (due to Lascoux and
Sch\"utzenberger \cite{LS}), and this has been derived by Garsia and
Procesi \cite{GP} from Springer theory of Weyl group representations
\cite{Sp}. Kostka polynomials also coincide with Lusztig's
$q$-weight multiplicity in finite dimensional irreducible
representations of the general linear Lie algebra \cite{Lu, Ka}.
R.~Brylinski \cite{Br} introduced a Brylinski-Kostant filtration on
weight spaces of finite dimensional irreducible representations and
proved that Lusztig's $q$-weight multiplicities (and hence Kostka
polynomials) are precisely the polynomials associated to such a
filtration. For more on Kostka polynomials, we refer to Macdonald
\cite{Mac} or the survey paper of D\'esarm\'enien, Leclerc and
Thibon \cite{DLT}.

The classical theory of representations and characters of symmetric
groups admits a remarkable spin generalization due to Schur
\cite{Sch}. Many important constructions for symmetric groups and
symmetric functions admit highly nontrivial spin counterparts,
including Schur $Q$-functions and shifted tableaux (cf. e.g.
\cite{Mac}) and Robinson-Shensted-Knuth correspondence (see Sagan
\cite{Sa}).

The goal of this paper is to add several items to the list of spin
counterparts of classical theory. We introduce a notion of spin
Kostka polynomials, and establish their main properties including
the integrality and positivity as well as representation theoretic
interpretations. We also introduce a notion of spin Hall-Littlewood
polynomials. Our constructions afford natural $q,t$-generalizations
in connection with Macdonald polynomials. The definitions made in
this paper look very classical, and we are led to them from
representation theoretic considerations. Once things are set up
right, the proofs of the main results, which are based on the
classical deep work on Kostka polynomials, are remarkably easy.

There has been a very interesting work of Tudose and Zabrocki
\cite{TZ} who defined a version of  spin Kostka polynomials and spin
Hall-Littlewood polynomials (in different terminology), adapting the
vertex operator technique developed by Jing \cite{Ji} and others
(cf. Shimozono and Zabrocki \cite{SZ}). Their definitions do not
coincide with ours as shown by examples, and the precise connection
between the two (if it exists) remains unclear. The $Q$-Kostka
polynomials of Tudose and Zabrocki  conjecturally admit integrality
and positivity, but their approach does not seem to easily exhibit
the connections to representation theory or afford
$q,t$-generalization as developed in this paper.
Throughout the paper we work with the complex field $\C$ as the ground field.

\subsection{}

Denote by $\mc P$ the set of partitions and by $\mc P_n$ the set of
partitions of $n$. Denote by  $\mc {SP}$ the set of strict
partitions and by $\mc {SP}_n$ the set of strict partitions of $n$.
Let $\La$ denote the ring of symmetric functions in $x=(x_1,x_2,\ldots)$,
and let $\Ga$  be
the subring of $\La$ with a $\Z$-basis given by the Schur
$Q$-functions $Q_{\xi}(x)$ indexed by $\xi\in\mc{SP}$, cf.
\cite{Mac}.

As an element in the ring $\La$,  the Schur $Q$-functions
$Q_{\xi}(x)$ can be expressed as a linear combination in the basis
of the Hall-Littlewood functions $P_{\mu}(x;t)$ and we define the
{\em spin Kostka polynomial} $K_{\xi\mu}^-(t),$ for $\xi\in\mc{SP}$
and $\mu\in\mc{P}$, to be the corresponding coefficient; see
\eqref{eq:spin Kostka}. Recall that the entries $K_{\la\mu}(t),
\la,\mu\in\mc P$  of the transition matrix from the Schur basis
$\{s_\la\}$ to the Hall-Littlewood basis $\{P_{\mu}(x;t)\}$ for
$\Z[t]\otimes_{\Z}\La$ are the Kostka polynomials.

Our first result concerns some remarkable properties satisfied by
the spin Kostka polynomials (compare with Theorem \ref{thm:Kostka},
where some well-known properties of the usual Kostka polynomials are
listed). For a partition $\la\in\mc P$  with length $\ell(\la)$,
we set
\begin{eqnarray*}
n(\lambda)=\sum_{i\geq 1}(i-1)\lambda_i, \quad
\delta(\lambda)= \left \{
 \begin{array}{ll}
 0,
 & \text{ if }\ell(\lambda) \text{ is even}, \\
 1
 , & \text{ if }\ell(\lambda) \text{ is odd}.
 \end{array}
 \right.
\end{eqnarray*}
For $\xi\in\mc{SP}$, we denote by $\xi^*$ the shifted diagram of
$\xi$, by $c_{ij}$  the content, by $h^*_{ij}$ the shifted hook
length of the cell $(i,j) \in \xi^*$. Also let $K^-_{\xi\mu}$ be the
number of marked shifted tableaux of shape $\xi$ and weight $\mu$;
see Section~\ref{subsec:notation} for precise definitions.

\begin{thma}\label{thm:thma}
The spin Kostka polynomials $K_{\xi\mu}^-(t)$ for $\xi\in\mc {SP}_n,
\mu\in\mc P_n$ have the following properties:
\begin{enumerate}
\item $K_{\xi\mu}^-(t)=0$ unless $\xi\geq\mu$; $K^-_{\xi\xi}(t)=2^{\ell(\xi)}$.

\item The degree of the polynomial $K_{\xi\mu}^-(t)$ is $n(\mu)-n(\xi)$.

\item
$2^{-\ell(\xi)}K_{\xi\mu}^-(t)$ is a polynomial with non-negative
integer coefficients.

\item
$K_{\xi\mu}^-(1)=K^-_{\xi\mu}$;\quad
$K^-_{\xi\mu}(-1)=2^{\ell(\xi)}\delta_{\xi\mu}$ .

\item $K^-_{(n)\mu}(t)=t^{n(\mu)}\prod^{\ell(\mu)}_{i=1}(1+t^{1-i}).$

\item
$K^-_{\xi(1^n)}(t)=\ds\frac{t^{n(\xi)}(1-t)(1-t^{2})\cdots(1-t^{n})\prod_{(i,j)\in
\xi^*}(1+t^{c_{ij}})} {\prod_{(i,j)\in \xi^*}(1-t^{h^*_{ij}})}.$

\end{enumerate}
\end{thma}

\subsection{}

It is known (cf. Kleshchev \cite{Kle}) that  the spin representation
theory of the symmetric group is equivalent to its counterpart for
Hecke-Clifford algebra $\HC := \mathcal{C}_n \rtimes \C S_n$, and
the irreducible $\HC$-(super)modules $D^{\xi}$ are parameterized by
strict partitions $\xi\in\mc{SP}_n$. The Hecke-Clifford algebra
$\HC$ as well as its modules in this paper admit a $\Z_2$-graded
(i.e, super) structure even though we will avoid using the
terminology of supermodules.

For a partition $\mu\in\mc P_n$,  let $\mc B_{\mu}$ be the variety
of flags preserved by a nilpotent matrix of Jordan block form of
shape $\mu$, which is a closed subvariety of the flag variety
$\mc{B}$ of $GL_n(\C)$.   The cohomology group
$H^{\bullet}(\mc{B}_{\mu})$ of $\mc B_{\mu}$ is naturally an
$S_n$-module, and the induced $\HC$-module ${\rm ind}^{\HC}_{\C S_n}
H^{\bullet}(\mc{B}_{\mu}) \cong \mathcal{C}_n \otimes
H^{\bullet}(\mc{B}_{\mu})$ is $\Z_+$-graded, with the grading
inherited from the one on $ H^{\bullet}(\mc{B}_{\mu})$. Define a
polynomial $C^-_{\xi\mu}(t)$ (as a graded multiplicity) by
\begin{align}\label{eqn:mult.C-}
C^-_{\xi\mu}(t) :=\sum_{i\geq 0}t^i \Big(\dim\Hom_{\HC}(D^{\xi},
\Cl\otimes H^{2i}(\mc{B}_{\mu}))\Big),
\end{align}
which should be morally viewed as a version of Springer theory
(undeveloped yet) of the queer Lie supergroups.

The queer Lie superalgebra $\q(n)$ contains the general linear Lie
algebra  $\mf{gl}(n)$ as its even subalgebra, and its irreducible
polynomial representations $V(\xi)$ are parameterized by highest
weights $\xi\in\mc{SP}$ with $\ell(\xi)\leq n$. Let $e$ be a regular
nilpotent element in $\mf{gl}(n)$, regarded as an element in the
even subalgebra of $\q(n)$. For $\mu\in \mc P$ with $\ell(\mu)\leq
n$, using the action of $e$ we introduce a Brylinski-Kostant
filtration on the weight space $V(\xi)_{\mu}$ and denote by
$\ga^-_{\xi\mu}(t)$ the associated polynomial (or $q$-weight
multiplicity).

The spin Kostka polynomial $K^-_{\xi\mu}(t)$ can be interpreted in
terms of graded multiplicity $C^-_{\xi\mu}(t)$ as well as the
$q$-weight multiplicity $\ga^-_{\xi\mu}(t)$ as follows (also see
Proposition~\ref{prop:spin q-weight} for another expression of
$q$-weight multiplicity).

\begin{thmb}\label{thm:spin mult.}
Suppose $\xi\in\mc{SP}_n, \mu\in\mc P_n$. Then we have
\begin{enumerate}
\item
$ K^-_{\xi\mu}(t) =
2^{\frac{\ell(\xi)-\delta(\xi)}{2}}  C^-_{\xi\mu}(t^{-1})t^{n(\mu)}.
$

\item
$ K^-_{\xi\mu}(t) =
2^{\frac{\ell(\xi)-\delta(\xi)}{2}} \ga^-_{\xi\mu}(t).$
\end{enumerate}
\end{thmb}

Theorem~ A(6) and Theorem~B(1) for $\mu =(1^n)$ (note that
$\mc{B}_{(1^n)} =\mc{B}$) are reinterpretation of a main result of
our previous work \cite{WW} on the spin coinvariant algebra.
Actually, this has been our original motivation of introducing spin
Kostka polynomials and finding representation theoretic
interpretations. The two interpretations of the spin Kostka
polynomials in Theorem~B are connected to each other via
Schur-Sergeev duality between $\q(n)$ and the Hecke-Clifford algebra
\cite{Se}.

\subsection{}
In Section~\ref{sec:spin Hall}, we construct a map $\Phi$ and
a commutative diagram:
%
% $$\unitlength=1cm
%\begin{picture}(6,2.5)
%\put(1.3,2){$R$}
%\put(3.9,2){$R^-$}
%\put(1.3,0.2){$\La$} \put(3.9,0.2){$\Ga$}
%\put(1.8,2.1){\vector(1,0){1.8}} \put(1.8,0.3){\vector(1,0){1.8}}
%\put(2.65,2.2){$\Phi$} \put(2.65,0.4){$\varphi$}
%\put(1.5,1.85){\vector(0,-1){1.3}}
%\put(4.0,1.85){\vector(0,-1){1.3}}
%\put(1.03,1.1){${\rm ch}$} \put(4.4,1.1){${\rm ch}^-$}
%\end{picture}
%$$
%
\begin{eqnarray*}
\begin{CD}
 R @>\Phi>> R^- \\
 @V\text{ch}VV @VV\text{ch}^-V \\
 \La @>\varphi>> \Ga_\Q
  \end{CD}
\end{eqnarray*}
where $\varphi$ given in  \eqref{eqn:mapvarphi} is as in \cite[III,
\S8, Example 10]{Mac}, and $\ch$ and $\ch^-$ are characteristic maps
from the module categories of $S_n$ and $\HC$ respectively. The
commutative diagram serves  as a bridge of various old and new
constructions, and the use of Hecke-Clifford algebra provides simple
representation theoretic interpretations of some symmetric function
results  in \cite{St} and \cite{Mac}. We further define a spin
analogue $H^-_{\mu}(x;t)$ of the normalized Hall-Littlewood function
$H_{\mu}(x;t)$ via the spin Kostka polynomials. We show that
$H^-_{\mu}(x;t)$ coincides with the image of $H_{\mu}(x;t)$ under
the map $\varphi$, and it satisfies additional favorable properties
(see Theorem~\ref{th:spinHL}).

We also sketch a similar construction of the spin Macdonald
polynomials $H^-_\mu (x;q,t)$ and the spin $q,t$-Kostka polynomials
$K^-_{\xi\mu} (q,t)$.  The use of $\Phi$ and $\varphi$ makes such a
$q,t$-generalization possible.

\subsection{}
The paper is organized as follows.  In Section \ref{sec:ProofA}, we
review some basics on Kostka polynomials, introduce the spin Kostka
polynomials, and then prove Theorem~ A. The representation theoretic
interpretations of spin Kostka polynomials  are presented and
Theorem~ B  is proved in Section~\ref{sec:ProofBC}. In Section
\ref{sec:spin Hall}, we introduce  the  spin  Hall-Littlewood
functions, spin Macdonald polynomials and spin $q,t$-Kostka
polynomials. We end the paper with a list of open problems.

{\bf Acknowledgments.}  We thank Naihuan Jing, Bruce Sagan, and Mark
Shimozono for helpful discussions and their interests in this work.
The first author is partially supported by
Excellent young scholars Research Fund of Beijing Institute of Technology.
The research of the second author is partially supported by NSF
grant DMS-0800280. This paper is partially written up during our
visit to Academia Sinica and NCTS (South) in Taiwan, from which we
gratefully acknowledge the support and excellent working
environment.

%%%%%%%%%%%
%%%%%%%%%%%
\section{Spin Kostka polynomials}
\label{sec:ProofA}

In this section, we shall first review the basics for Kostka
polynomials. Then, we introduce the spin Kostka polynomials and
prove that these polynomials satisfy the properties listed in
Theorem~A.

\subsection{Basics on Kostka polynomials}\label{subsec:Kostka}

A partition $\la$ will be identified with its Young diagram, that
is, $\la=\{(i,j)\in\mathbb{Z}^2 \mid 1\leq i\leq \ell(\la), 1\leq
j\leq \lambda_i\}$. To each cell $(i,j)\in \la$, we associate its
content $c_{ij}=j-i$ and hook length $h_{ij}=\la_i+\la_j'-i-j+1$,
where $\la'=(\la_1',\la_2',\ldots)$ is the conjugate partition of
$\la$. For $\la, \mu\in\mc P$, let $K_{\la\mu}$ be the Kostka number
which counts the number of semistandard tableaux of shape $\la$ and
weight $\mu$. We write $|\la|=n$ for $\la \in \mc P_n$. The
dominance order on $\mc P$ is defined by letting
$$
\la \geq \mu \Leftrightarrow |\la| =|\mu| \text{ and } \la_1+\ldots
+\la_i \geq \mu_1 +\ldots +\mu_i, \forall i \geq 1.
$$

Let $\la, \mu\in\mc P$.  The Koksta polynomial $K_{\la\mu}(t)$
is defined  by
\begin{align}\label{eqn:Kostka}
s_{\la}(x)=\sum_{\mu}K_{\la\mu}(t)P_{\mu}(x;t),
\end{align}
where $P_{\mu}(x;t)$ and $s_{\la}(x)$ are Hall-Littlewood functions
and Schur functions respectively (cf. \cite[III, \S2]{Mac}). The
following is a summary of a long development by many authors.

\begin{thm}(cf. \cite[III, \S6]{Mac}) \label{thm:Kostka}
Suppose  $\la,\mu\in\mc P_n$.
Then the Kostka polynomial $K_{\la\mu}(t)$ satisfies the following properties:
\begin{enumerate}
\item $K_{\la\mu}(t)=0$ unless $\la\geq\mu$;  $K_{\la\la}(t)=1$.

\item The degree of $K_{\la\mu}(t)$ is $n(\mu)-n(\la)$.

\item $K_{\la\mu}(t)$ is a polynomial with non-negative integer coefficients.

\item $K_{\la\mu}(1)=K_{\la\mu}$.

\item $K_{(n)\mu}(t)= t^{n(\mu)}$.

\item
$\ds K_{\la(1^n)}=\frac{t^{n(\la')}(1-t)(1-t^2)\cdots
(1-t^n)}{\prod_{(i,j)\in\la}(1-t^{h_{ij}})}$.

\end{enumerate}
\end{thm}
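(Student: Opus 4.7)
The plan is to derive the six properties directly from the defining transition \eqref{eqn:Kostka} together with the standard properties of the Hall-Littlewood basis $\{P_\mu(x;t)\}$ and the Schur basis $\{s_\la\}$ of $\La$ developed in \cite[Chapter III]{Mac}. Properties (1), (4), and (5) are essentially formal. For (1), both expansions $P_\mu(x;t) = m_\mu + \sum_{\nu<\mu} u_{\mu\nu}(t) m_\nu$ and $s_\la = \sum_{\mu\le\la} K_{\la\mu}\, m_\mu$ are unitriangular with respect to the dominance order, so the transition matrix $(K_{\la\mu}(t))$ between them inherits the same unitriangularity, yielding $K_{\la\mu}(t)=0$ unless $\la\ge\mu$ and $K_{\la\la}(t)=1$. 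Specializing $t=1$ uses $P_\mu(x;1)=m_\mu$ to give (4). For (5), $s_{(n)}=h_n$, and the classical identity $h_n=\sum_{\mu\vdash n} t^{n(\mu)}P_\mu(x;t)$ (cf.\ \cite[III, (2.10)]{Mac}) gives the result immediately.

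For (2) on the degree, I would use the principal specialization: applying $x_i\mapsto t^{i-1}$ to \eqref{eqn:Kostka} and comparing degrees with the known principal specializations of $s_\la$ and $P_\mu(x;t)$ pins down $\deg K_{\la\mu}(t) = n(\mu)-n(\la)$; alternatively this comes out of the Hall-polynomial interpretation. Property (6) then arises by applying the same principal specialization $x_i\mapsto t^{i-1}$ to \eqref{eqn:Kostka}, inserting the hook-content formula
\[
s_\la(1,t,t^2,\ldots) = \frac{t^{n(\la)}}{\prod_{(i,j)\in\la}(1-t^{h_{ij}})},
\]
and using the explicit Hall-Littlewood specializations to isolate the coefficient of $P_{(1^n)}(x;t)$; this is exactly the route taken in Macdonald.

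The genuine obstacle is (3): positivity and integrality of the coefficients of $K_{\la\mu}(t)$. Integrality is comparatively soft and can be extracted from the fact that the inverse transition matrix (expressing $P_\mu$ in terms of Schur functions) has integer entries, which propagates to $K_{\la\mu}(t)\in\Z[t]$. Positivity is the deep theorem of Lascoux--Sch\"utzenberger, who established the combinatorial formula
\[
K_{\la\mu}(t)=\sum_{T} t^{c(T)},
\]
where $T$ runs over semistandard Young tableaux of shape $\la$ and weight $\mu$ and $c(T)$ is the charge statistic. Since the present statement is advertised as a compilation with \cite[III, \S6]{Mac} as the reference, I would invoke this result rather than reprove it; the Garsia--Procesi derivation \cite{GP} via the cohomology of Springer fibers $\mc{B}_\mu$ (which will reappear in the representation-theoretic setting of Theorem~B) offers an independent geometric route. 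A consistency check: specializing the charge formula at $t=1$ recovers (4).
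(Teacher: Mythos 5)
The paper offers no proof of this theorem: it is stated as a compilation of classical results with the citation \cite[III, \S6]{Mac}, so there is no internal argument to compare yours against. Measured against the standard literature, your treatment of (1), (4), (5) is correct (unitriangularity of both bases with respect to $\{m_\mu\}$, the specialization $P_\mu(x;1)=m_\mu$, and the identity $h_n=\sum_{\mu\vdash n}t^{n(\mu)}P_\mu(x;t)$, which is \cite[III, \S3, Example~1(2)]{Mac} rather than (2.10)), and deferring (3) to Lascoux--Sch\"utzenberger and Garsia--Procesi is exactly what a compilation statement calls for; integrality indeed follows from the unitriangular change of basis over $\Z[t]$.

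The genuine gap is in the mechanism you propose for (2) and (6). After substituting $x_i\mapsto t^{i-1}$, the identity \eqref{eqn:Kostka} collapses to a single scalar identity in $t$ (for each number of variables), from which one can neither read off the degree of an individual $K_{\la\mu}(t)$ nor ``isolate the coefficient of $P_{(1^n)}(x;t)$'' --- distinct summands can interfere, and a specialized basis element is just a number, not something whose coefficient can be extracted. This is also not Macdonald's route: the degree statement comes from the charge statistic (the unique tableau of maximal charge $n(\mu)-n(\la)$) or from the Hall-polynomial description, and formula (6) is obtained from the dual-basis identity $Q'_{(1^n)}=(1-t)\cdots(1-t^n)\,e_n[X/(1-t)]=\sum_\la t^{n(\la')}\frac{(1-t)\cdots(1-t^n)}{\prod_{(i,j)\in\la}(1-t^{h_{ij}})}\,s_\la(x)$, where the hook--content formula enters plethystically rather than by specializing \eqref{eqn:Kostka}; equivalently, $t^{n((1^n))}K_{\la(1^n)}(t^{-1})$ is the graded multiplicity of $S^\la$ in the coinvariant algebra. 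That last interpretation is worth noting because it is precisely how the present paper proves the spin analogue, Theorem~A(6): not by symmetric-function manipulation but via Theorem~B(1) and the coinvariant-algebra computation of \cite{WW}.
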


Let $\mc{B}$ be the flag variety for the general linear group
$GL_n(\C)$. For a partition $\mu$ of $n$,  let $\mc{B}_{\mu}$ denote
the subvariety of $\mc B$ consisting of flags preserved by the
Jordan canonical form $J_{\mu}$ of shape $\mu$. It is
known~\cite{Sp} that the cohomology group
$H^{\bullet}(\mc{B}_{\mu})$ of $\mc{B}_{\mu}$ with complex
coefficient affords a graded representation of the symmetric group
$S_n$. Define $C_{\la\mu}(t)$ by
\begin{align}\label{eqn:mult.C}
C_{\la\mu}(t)=\sum_{i\geq0}t^i~\Hom_{S_n}(S^{\la}, H^{2i}(\mc{B}_{\mu})),
\end{align}
where $S^{\la}$ denotes the Specht module over $S_n$.

\begin{thm}(cf. \cite[III, \S7, Example 8]{Mac}, \cite[(5.7)]{GP})
\label{thm:Kostka gr.mult}
The following holds for $\la,\mu\in\mc P$:
$$
K_{\la\mu}(t)=C_{\la\mu}(t^{-1})t^{n(\mu)}.
$$
\end{thm}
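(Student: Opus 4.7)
The plan is to invoke the Springer--Garsia--Procesi description of the graded $S_n$-module $H^{\bullet}(\mc{B}_\mu)$, and then simply unwrap the definition of $C_{\la\mu}(t)$. Since $\mc{B}_\mu$ has complex dimension $n(\mu)$, the cohomology is concentrated in even degrees $0, 2, \ldots, 2n(\mu)$, and the statement to be proved amounts to showing that the graded multiplicity generating function on the left is essentially the Kostka polynomial on the right, up to an inversion $t\mapsto t^{-1}$ and a twist by $t^{n(\mu)}$ reflecting the opposite convention for grading cohomology.

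First I would recall that by Garsia--Procesi \cite{GP}, $H^{\bullet}(\mc{B}_\mu)$ is isomorphic as a graded $S_n$-module to an explicitly presented graded quotient ring $R_\mu$ of $\C[x_1,\ldots,x_n]$, under an isomorphism that doubles degrees. Second, the graded Frobenius characteristic of $R_\mu$ was computed to expand in the Schur basis with coefficients given by (modified) Kostka polynomials, so that
$$\sum_{i\geq 0} t^i\, \mathrm{ch}_{S_n}\!\big(H^{2i}(\mc{B}_\mu)\big) \;=\; t^{n(\mu)} \sum_{\la\in\mc{P}_n} K_{\la\mu}(t^{-1})\, s_\la(x).$$
This is essentially the content of \cite[III, \S7, Example 8]{Mac}, and it is the main deep input.

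With that identity in hand, the proof is a one-line computation. Taking the Hall inner product of both sides with $s_\la$ (equivalently, extracting the multiplicity of the Specht module $S^\la$), the left side becomes $\sum_{i\geq 0} t^i \dim\Hom_{S_n}(S^\la, H^{2i}(\mc{B}_\mu)) = C_{\la\mu}(t)$ by definition \eqref{eqn:mult.C}, while the right side becomes $t^{n(\mu)} K_{\la\mu}(t^{-1})$. Substituting $t\mapsto t^{-1}$ and then multiplying by $t^{n(\mu)}$ yields the desired identity $K_{\la\mu}(t) = C_{\la\mu}(t^{-1})\, t^{n(\mu)}$.

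The main obstacle is the identification of the graded Frobenius characteristic of $H^{\bullet}(\mc{B}_\mu)$ with the Hall--Littlewood/Kostka generating series; this is the deep ingredient due to Hotta--Springer \cite{Sp} and Garsia--Procesi \cite{GP}, and the remainder of the argument is only a matter of tracking grading conventions and inverting $t$. Since the theorem is being quoted as classical background, the above would be presented as a brief review rather than a detailed derivation.
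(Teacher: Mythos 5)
The paper states this theorem without proof, citing \cite[III, \S7, Example 8]{Mac} and \cite[(5.7)]{GP}, and your write-up is precisely the content of those citations: the deep input is the identification of $\sum_i t^i\,\mathrm{ch}\,H^{2i}(\mc B_\mu)$ with the modified Hall--Littlewood function $t^{n(\mu)}\sum_\la K_{\la\mu}(t^{-1})s_\la$, after which extracting the coefficient of $s_\la$ and substituting $t\mapsto t^{-1}$ gives the stated identity. Your bookkeeping of the grading conventions and the substitution is correct, so this matches the paper's (implicit) treatment.
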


It is well known that the cohomology ring $H^{\bullet}(\mc B)$ of
the flag variety $\mc B$ coincides with the coinvariant algebra of
the symmetric group $S_n$.  Garsia and Procesi \cite{GP} gave a
purely algebraic construction of the graded $S_n$-module
$H^{\bullet}(\mc{B}_{\mu})$ in terms of quotients of the coinvariant
algebra of symmetric groups as well as  a proof of
Theorem~\ref{thm:Kostka gr.mult}.

Denote by $\{\epsilon_1,\ldots, \epsilon_n\}$ the basis dual to the
standard basis $\{E_{ii}~|~1\leq i\leq n\}$ in the standard Cartan
subalgebra of $\mf{gl}(n)$, where $E_{ii}$ denotes the matrix whose
$(i,i)$th entry is 1 and zero elsewhere. Let $L(\la)$ be the irreducible
$\mf{gl}(n)$-module with highest weight $\la$ for $\la\in\mc P$ with
$\ell(\la)\leq n$. For each $\mu\in\mc P$ with $\ell(\mu)\leq n$,
define the $q$-weight multiplicity of weight $\mu$ in $L(\la)$ to be
$$
m^{\la}_{\mu}(t)=[e^{\mu}]
\frac{\prod_{\al>0}(1-e^{-\al})}{\prod_{\al>0}(1-te^{-\al})}~{\rm
ch}L(\la),
$$
where the product $\prod_{\al>0}$ is over all positive roots $\{
\epsilon_i -\epsilon_j \mid 1 \le i < j \le n \}$ for $\mf{gl}(n)$
and $[e^{\mu}]f(e^{\epsilon_1},\ldots, e^{\epsilon_n})$ denotes the
coefficient of the monomial $e^{\mu}$ in a formal series
$f(e^{\epsilon_1},\ldots, e^{\epsilon_n})$. According to a
conjecture of Lusztig proved by Kato \cite{Ka, Lu}, we have
\begin{equation}\label{eqn:q-weight}
K_{\la\mu}(t)=m^{\la}_{\mu}(t).
\end{equation}
Let $e$ be a regular nilpotent element in the Lie algebra
$\mf{gl}(n)$. For each $\mu\in\mc P$ with $\ell(\mu)\leq n$, define
the Brylinski-Kostant filtration on the weight space $L(\la)_{\mu}$
by
\begin{align*}
0\subseteq J_{e}^0(L(\la)_{\mu})\subseteq J^1_{e}(L(\la)_{\mu})\subseteq\cdots
\end{align*}
with
\begin{align*}
J_{e}^k(L(\la)_{\mu})=\{v\in L(\la)_{\mu}~|~e^{k+1}v=0\},
\end{align*}
where we assume $J_e^{-1}(L(\la)_{\mu})=\{0\}$.
Define a polynomial $\ga_{\la\mu}(t)$  by
\begin{align*}
\ga_{\la\mu}(t)=\sum_{k\geq 0}\Big(\dim
J_{e}^k(L(\la)_{\mu})/J_{e}^{k-1}(L(\la)_{\mu})\Big)t^k.
\end{align*}

The following theorem is due to R.~Brylinski (see
\cite[Theorem~3.4]{Br} and (\ref{eqn:q-weight})).

\begin{thm}\label{thm:jump}
Suppose $\la, \mu\in\mc P$ with $\ell(\la)\leq n$ and $\ell(\mu)\leq
n$. Then we have
$$
K_{\la\mu}(t)=\ga_{\la\mu}(t).
$$
\end{thm}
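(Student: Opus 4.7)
The plan is to reduce the statement to two well-established results and combine them. By the Lusztig conjecture, proved by Kato, equation \eqref{eqn:q-weight} already gives $K_{\la\mu}(t)=m^{\la}_{\mu}(t)$. So what remains is the identity $\ga_{\la\mu}(t)=m^{\la}_{\mu}(t)$, which is exactly the content of \cite[Theorem~3.4]{Br}. Chaining the two equalities yields $K_{\la\mu}(t)=\ga_{\la\mu}(t)$ under the stated hypothesis $\ell(\la),\ell(\mu)\leq n$, which is needed so that $L(\la)$ and the weight space $L(\la)_\mu$ make sense in $\mf{gl}(n)$.

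To indicate the content of Brylinski's half, the main step I would follow is to extend the regular nilpotent $e$ to a principal $\mathfrak{sl}_2$-triple $(e,h,f)$ inside $\mf{gl}(n)$ and decompose $L(\la)$ completely as a module over this triple. On each simple $\mathfrak{sl}_2$-summand, the powers of $e$ define a strict filtration whose jumps are controlled by the spectrum of $h$. Using the Weyl character formula for $L(\la)$ together with the Kostant multiplicity formula, and expanding the denominator $\prod_{\al>0}(1-te^{-\al})^{-1}$ that appears in $m^{\la}_{\mu}(t)$, one matches the generating function for the graded pieces $J^k_e(L(\la)_\mu)/J^{k-1}_e(L(\la)_\mu)$ with the $t$-expansion of $m^{\la}_{\mu}(t)$.

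The main obstacle is that $e$ does not preserve the weight grading on $L(\la)$, so the interaction of the filtration $\{J^k_e\}$ with the weight decomposition is not automatic. The crucial input is a transversality/regularity statement to the effect that on each weight space the action of $e$ is ``as injective as it can possibly be,'' so that no unexpected collapse occurs and $\dim J^k_e(L(\la)_\mu)/J^{k-1}_e(L(\la)_\mu)$ realizes the predicted coefficient. Exactly this genericity is what forces the hypothesis that $e$ be \emph{regular}; it is the technical heart of \cite[Theorem~3.4]{Br} and the only step for which a direct hands-on argument would be substantially more than bookkeeping.
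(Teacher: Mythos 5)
Your reduction is exactly how the paper treats this statement: Theorem~\ref{thm:jump} is quoted as a combination of the Lusztig--Kato identity \eqref{eqn:q-weight} ($K_{\la\mu}(t)=m^{\la}_{\mu}(t)$) with Brylinski's \cite[Theorem~3.4]{Br} ($m^{\la}_{\mu}(t)=\ga_{\la\mu}(t)$), with no independent proof given. Your additional sketch of the principal $\mathfrak{sl}_2$ argument behind Brylinski's half is a reasonable summary but is not required here.
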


\subsection{ Schur $Q$-functions and spin Kostka polynomials }
 \label{subsec:notation}

Given a partition $\la\in\mc P$, suppose that the main diagonal of
the Young diagram $\la$ contains $r$ cells. Let $\alpha_i=\la_i-i$
be the number of cells in the $i$th row of $\la$ strictly to the
right of $(i,i)$, and let $\beta_i=\la_i'-i$ be the number of cells
in the $i$th column of $\la$ strictly below $(i,i)$, for $1\leq
i\leq r$. We have $\alpha_1>\alpha_2>\cdots>\alpha_r\geq0$ and
$\beta_1>\beta_2>\cdots>\beta_r\geq0$. Then the Frobenius notation
for a partition is
$\la=(\alpha_1,\ldots,\alpha_r|\beta_1,\ldots,\beta_r)$. For
example, if $\la=(5,4,3,1)$, then $\alpha =(4,2,0), \beta=(3,1,0)$
and hence $\la=(4,2,0|3,1,0)$ in Frobenius notation.

For a strict partition $\xi\in\mc{SP}_n$, let $\xi^*$ be the
associated shifted Young diagram, that is,
$$
\xi^*=\{(i,j) \mid 1\leq i\leq \ell(\xi), i\leq j\leq\xi_i+i-1 \}
$$
which is obtained from the ordinary Young diagram by shifting the
$k$th row to the right by $k-1$ squares, for each $k$. Given
$\xi\in\mc{SP}_n$ with $\ell(\xi)=\ell$, define its double partition
(or double diagram) $\widetilde{\xi}$ to be
$\widetilde{\xi}=(\xi_1,\ldots,\xi_{\ell} |
\xi_1-1,\xi_2-1,\ldots,\xi_{\ell}-1)$ in Frobenius notation.
Clearly, the shifted Young diagram $\xi^*$ coincides with the part
of $\widetilde{\xi}$ that lies above the main diagonal. For each
cell $(i,j)\in \xi^*$, denote by $h^*_{ij}$ the associated hook
length in the Young diagram $\widetilde{\xi}$, and set the content
$c_{ij}=j-i$.

For example, let $\xi= (4, 2, 1)$. The corresponding shifted
diagram and double diagram are
$$
\xi^*=\young(\,\,\,\,,:\,\,,::\,),
\qquad \qquad
\widetilde{\xi}=\young(\,\,\,\,\,,\,\,\,\,,\,\,\,\,,\,).
$$
The contents of $\xi$ are listed in the corresponding cells of
$\xi^*$ as follows:
$$
\young(0123,:01,::0).
$$
The shifted hook lengths for each cell in $\xi^*$ are defined as the
usual hook lengths for the corresponding cell in  the double diagram
$\widetilde{\xi}$, as follows:
$$
\young(\,6541,\,\,32,\,\,\,1,\,),
\qquad \qquad\young(6541,:32,::1).
$$

Denote by $\mathbf{P}'$ the ordered alphabet
$\{1'<1<2'<2<3'<3\cdots\}$. The symbols $1',2',3',\ldots$ are said
to be marked, and we shall denote by $|a|$ the unmarked version of
any $a\in\mathbf{P}'$; that is, $|k'| =|k| =k$ for each $k \in
\N$. For a strict partition $\xi$, a {\it marked shifted tableau}
$T$ of shape $\xi$, or a  {\it marked shifted $\xi$-tableau}
$T$, is an assignment
$T:\xi^*\rightarrow\mathbf{P}'$ satisfying:
\begin{itemize}
\item[(M1)] The letters are weakly increasing along each row and
column.

\item[(M2)] The letters $\{1,2,3,\ldots\}$ are strictly increasing
along each column.

\item[(M3)] The letters $\{1',2',3',\ldots\}$ are strictly
increasing along each row.   \label{M3}
\end{itemize}

For a marked shifted tableau $T$ of shape $\xi$, let $\alpha_k$ be
the number of cells $(i,j)\in \xi^*$ such that $|T(i,j)|=k$ for
$k\geq 1$. The sequence $(\alpha_1,\alpha_2,\alpha_3,\ldots)$ is
called the {\em weight} of $T$. The Schur $Q$-function  associated
to $\xi$ can be interpreted as (see \cite{Sa, St, Mac})
$$
Q_{\xi}(x)=\sum_{T}x^{T},
$$
where the summation is taken over all marked shifted tableaux of
shape $\xi$, and
$x^T=x_1^{\alpha_1}x_2^{\alpha_2}x_3^{\alpha_3}\cdots$ if $T$ has
weight $(\alpha_1,\alpha_2,\alpha_3,\ldots)$. Set
$$
K^-_{\xi\mu}=\# \{T~|~T \text{ is a marked shifted tableau of shape
}\xi\text{ and weight }\mu\}.
$$
Then we have
\begin{equation}\label{eqn:Qmonomial}
Q_{\xi}(x)=\sum_{\mu}K^-_{\xi\mu}m_{\mu}(x).
\end{equation}

It will be convenient to introduce another family of symmetric
functions $q_{\la}(x)$ for any partition
$\la=(\la_1,\la_2,\ldots)$ as follows:
$ q_0(x)=1,$ $q_r(x)=Q_{(r)}(x)$ for $r\geq 1,$ and
$q_{\la}(x)=q_{\la_1}(x)q_{\la_2}(x)\cdots.$ The generating function $Q(u)$
for $q_r(x)$ is
\begin{align}
\sum_{r\geq 0}q_r(x)u^r=Q(u) =\prod_i\frac{1+x_iu}{1-x_iu}.\label{eqn:qr}
\end{align}

We will write $q_r =q_r(x)$, etc.,
whenever there is no need to specify the variables.
Let $\Ga$ be the $\Z$-algebra generated by $q_r, r\geq 1$, that is,
\begin{equation}  \label{eq:Gamma}
\Ga=\Z [q_1,q_2,\ldots].
\end{equation}
It is known that the set  $\{Q_{\xi} \mid \xi\in\mc{SP}\}$ forms a
$\Z$-basis of $\Ga$.

\begin{defn}
The {\em spin Kostka polynomials} $K^-_{\xi\mu}(t)$ for $\xi\in\mc
{SP}$ and $\mu\in\mc P$ are given by
\begin{align}\label{eq:spin Kostka}
Q_{\xi}(x)=\sum_{\mu}K^-_{\xi\mu}(t)P_{\mu}(x;t).
\end{align}
\end{defn}

\subsection{Properties of spin Kostka polynomials}\label{subsec:proofA}

For $\xi\in\mc{SP}$, write
\begin{equation}\label{eqn:QSchur}
Q_{\xi}(x)=\sum_{\la\in\mc P}b_{\xi\la}s_{\la}(x),
\end{equation}
for some suitable constants $b_{\xi\la}$.

\begin{prop} \label{prop:relate}
The following holds for $\xi\in\mc{SP}$ and $\mu\in\mc P$:
$$
K^-_{\xi\mu}(t)=\sum_{\la\in\mc P}b_{\xi\la}K_{\la\mu}(t).
$$
\end{prop}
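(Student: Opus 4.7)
The plan is essentially a one-line substitution argument, since both Kostka and spin Kostka polynomials are defined as transition coefficients into the same basis $\{P_\mu(x;t)\}$ of Hall-Littlewood functions.

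Concretely, I would start from the expansion \eqref{eqn:QSchur}, namely
\[
Q_\xi(x) = \sum_{\la \in \mc P} b_{\xi\la}\, s_\la(x),
\]
and then substitute into each Schur function its expansion \eqref{eqn:Kostka} in the Hall-Littlewood basis, $s_\la(x) = \sum_\mu K_{\la\mu}(t) P_\mu(x;t)$. After interchanging the two summations this gives
\[
Q_\xi(x) = \sum_{\mu \in \mc P} \Big( \sum_{\la \in \mc P} b_{\xi\la}\, K_{\la\mu}(t) \Big) P_\mu(x;t).
\]

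Comparing this with the defining expansion \eqref{eq:spin Kostka} of the spin Kostka polynomials,
\[
Q_\xi(x) = \sum_{\mu \in \mc P} K^-_{\xi\mu}(t)\, P_\mu(x;t),
\]
and using the fact that $\{P_\mu(x;t) \mid \mu \in \mc P\}$ is a basis of $\Q(t) \otimes_\Z \La$, the coefficients of $P_\mu(x;t)$ on both sides must agree, which yields the claimed identity.

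There is no real obstacle here: the only mild points are (i) ensuring that the double sum converges formally, which is fine because $b_{\xi\la}$ vanishes unless $|\la|=|\xi|$ and $K_{\la\mu}(t)$ vanishes unless $|\la|=|\mu|$, so the inner sum over $\la$ is finite, and (ii) invoking the linear independence of $\{P_\mu(x;t)\}$ over $\Z[t]$. Both are standard facts from \cite[III, \S2]{Mac}, so the proof reduces to the interchange of summations above.
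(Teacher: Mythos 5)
Your proof is correct and is essentially identical to the paper's: both substitute the expansion $s_\la = \sum_\mu K_{\la\mu}(t)P_\mu(x;t)$ into $Q_\xi = \sum_\la b_{\xi\la}s_\la$ and conclude by the linear independence of the Hall-Littlewood functions $P_\mu(x;t)$ in $\Z[t]\otimes_\Z\La$. Your extra remarks on the finiteness of the inner sum and the base ring for linear independence are fine but not needed beyond what the paper already records.
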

\begin{proof}
By (\ref{eqn:Kostka}) and (\ref{eqn:QSchur}), one can deduce that
$$
\sum_{\mu}K^-_{\xi\mu}(t)P_{\mu}(x;t) = \sum_{\la, \mu }b_{\xi\la}K_{\la\mu}(t)P_{\mu}(x;t).
$$
The proposition now follows from the fact that the Hall-Littlewood
functions $P_{\mu}(x;t)$ are linearly independent in $\Z[t]\otimes_{\Z}\La$.
\end{proof}
The usual Kostka polynomial satisfies  that  $K_{\la\mu} (0)
=\delta_{\la\mu}$. It follows from Proposition~\ref{prop:relate}
that
$$
K^-_{\xi\mu} (0) =b_{\xi\mu}.
$$

For $\xi\in\mc{SP}, \la\in\mc P$, set
\begin{equation}   \label{eq:gb}
g_{\xi\la}=2^{-\ell(\xi)}b_{\xi\la}.
\end{equation}

\begin{lem}\cite[Theorem 9.3]{St} \cite[III, (8.17)]{Mac}
    \label{lem:gxila}
The following holds for $\xi\in\mc{SP}, \la\in\mc P$:
\begin{align}
g_{\xi\la}\in \Z_+;\quad
g_{\xi\la}=0\text{ unless }\xi\geq\la;\quad
g_{\xi\xi}=1.
\end{align}
\end{lem}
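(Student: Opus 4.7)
The plan is to invoke the classical identity $Q_\xi = 2^{\ell(\xi)} P_\xi$ from Macdonald~(III, \S8), where $P_\xi$ denotes the $P$-Schur function (not to be confused with the Hall--Littlewood $P_\mu(x;t)$). Substituting this into the defining relation $Q_\xi = \sum_\lambda b_{\xi\lambda} s_\lambda$ from \eqref{eqn:QSchur} and using the normalization \eqref{eq:gb}, one identifies $g_{\xi\lambda}$ with the coefficient of $s_\lambda$ in the Schur expansion of $P_\xi$, i.e., $P_\xi = \sum_\lambda g_{\xi\lambda} s_\lambda$. The lemma then reduces to three properties of the transition matrix from $\{P_\xi\}$ to $\{s_\lambda\}$.

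For the triangularity $g_{\xi\lambda} = 0$ unless $\xi \geq \lambda$ and the normalization $g_{\xi\xi} = 1$, I would compare monomial expansions. The shifted-tableau formula for $P_\xi$ yields $P_\xi = m_\xi + \sum_{\mu < \xi} c_{\xi\mu} m_\mu$ with non-negative integer coefficients $c_{\xi\mu}$, the leading $1$ arising from the unique shifted tableau of shape $\xi^*$ with weight $\xi$. Combined with the standard unitriangular expansion $s_\lambda = m_\lambda + \sum_{\mu < \lambda} K_{\lambda\mu} m_\mu$, inversion of the resulting upper-unitriangular transition matrix (with respect to dominance order) produces both assertions simultaneously.

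The main obstacle is the integrality and non-negativity of the $g_{\xi\lambda}$ themselves. Here I would follow Stembridge [St, Theorem~9.3]: realize $g_{\xi\lambda}$ explicitly as the cardinality of a combinatorially defined set, for instance via a shifted analogue of the Littlewood--Richardson rule counting certain unmarked shifted tableaux of shape $\xi^*$ whose reading word satisfies a lattice-type condition determined by $\lambda$. Constructing the requisite bijection is the delicate step; once it is in place, non-negativity and integrality are immediate, and the leading-coefficient analysis of the preceding paragraph confirms the normalization $g_{\xi\xi}=1$. Alternatively, one may invoke the explicit formula in Macdonald~(III, (8.17)), which gives $g_{\xi\lambda}$ as a count of domino-like fillings and yields all three properties directly.
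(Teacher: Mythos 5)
Your proposal is essentially sound, but it follows the original combinatorial route of Stembridge/Macdonald rather than the paper's. The paper deliberately gives a \emph{representation-theoretic} proof: by Lemma~\ref{lem:res.Vxi}, $2^{\frac{\ell(\xi)+\delta(\xi)}{2}}g_{\xi\la}$ is the multiplicity of $L(\la)$ in the restriction of the irreducible $\q(n)$-module $V(\xi)$ to $\mf{gl}(n)\cong \q(n)_{\bar 0}$, so non-negativity is automatic; the vanishing unless $\xi\geq\la$ is highest-weight theory; $g_{\xi\xi}=1$ follows because the highest weight space $W_\xi$ has dimension exactly $2^{\frac{\ell(\xi)+\delta(\xi)}{2}}$; and integrality follows because $2^{-\ell(\xi)}Q_\xi$ lies in $\La$ and is therefore a $\Z$-combination of Schur functions. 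Your identification $P_\xi=\sum_\la g_{\xi\la}s_\la$ via $Q_\xi=2^{\ell(\xi)}P_\xi$ is correct, and your unitriangularity argument (comparing the monomial expansions of $P_\xi$ and $s_\la$ with respect to dominance order) is a valid, self-contained proof of the second and third assertions --- it is the combinatorial shadow of the highest-weight argument. The difference in substance lies in the first assertion: you defer non-negativity (and integrality) entirely to Stembridge's shifted Littlewood--Richardson rule, conceding that the requisite bijection is ``the delicate step'' without supplying it; since the lemma is attributed to \cite{St} and \cite{Mac} anyway, this is a legitimate citation, but it is exactly the deep input the paper's $\q(n)$-argument is designed to replace ``for the sake of completeness.'' A small refinement worth noting: integrality alone already follows from your second paragraph, since $2^{-\ell(\xi)}K^-_{\xi\mu}\in\Z$ (the $\ell(\xi)$ diagonal marks are free) and the inverse Kostka matrix is integral; only positivity genuinely requires the shifted LR rule or the representation theory. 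Finally, your description of Macdonald's III~(8.17) in terms of ``domino-like fillings'' should be checked against the source before being relied upon.
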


Stembridge \cite{St} proved Lemma~\ref{lem:gxila} by giving a
combinatorial formula for $g_{\xi\la}$ in terms of marked shifted
tableaux. We shall give a simple representation theoretic proof of
Lemma~\ref{lem:gxila} in Section~\ref{sec:q-wt} for the sake of
completeness.

\begin{proof}[Proof of Theorem A]
Combining Theorem~\ref{thm:Kostka}(1)-(3),  Lemma \ref{lem:gxila}
and Proposition~\ref{prop:relate}, we can easily verify that the
spin Kostka polynomial $K_{\xi\mu}^-(t) $ must satisfy the
properties (1)-(3) in Theorem A.

It is known that $P_{\mu}(x;1)=m_{\mu}$ and hence
by~(\ref{eqn:Qmonomial}) we have $K^-_{\xi\mu}(1)=K^-_{\xi\mu}$.
Also, $ Q_{\xi}=2^{\ell(\xi)}P_{\xi}(x;-1)$, and
$\{P_{\mu}(x;-1)~|~\mu\in\mc P\}$ forms a basis for $\La$ (see
\cite[p.253]{Mac}). Hence (4) is proved.

By \cite[III, $\S$3, Example 1(3)]{Mac} we have
\begin{align}\label{eq:sumHallP}
\prod_{i\geq 1}\frac{1+x_i}{1-x_i}=
\sum_{\mu}t^{n(\mu)}\prod^{\ell(\mu)}_{j=1}(1+t^{1-j})P_{\mu}(x;t).
\end{align}
Comparing the degree $n$ terms of
 (\ref{eq:sumHallP})~and~(\ref{eqn:qr}), we obtain that
\begin{align*}
Q_{(n)}(x)=q_n(x)=\sum_{\mu\in\mc P_n}t^{n(\mu)}\prod^{\ell(\mu)}_{j=1}(1+t^{1-j})P_{\mu}(x;t).
\end{align*}
Hence (5) is proved.

Part (6) actually follows from Theorem B(1) and the main result of
\cite{WW}, and let us postpone its proof after completing  the
proof of Theorem~B(1).
\end{proof}

\section{Spin Kostka polynomials and representation theory}\label{sec:ProofBC}

In this section, we shall give two interpretations of spin Kostka
polynomials in representation theory.

\subsection{The Frobenius characteristic map $\ch$} \label{subsec:ch}

Denote by $S_n\text{-mod}$   the category of finite dimensional
$S_n$-modules. Let $R_n=K(S_n\text{-mod})$ be Grothendieck group of
the category  $S_n\text{-mod}$ and set
$$
R=\bigoplus_{n\ge 0}R_n.
$$
Recall that $R_n$ admits an inner product by declaring the
irreducible characters to be orthonormal. Also there exists an inner
product $(,)$ on the ring $\La$ such that the Schur functions
$s_{\la}$ form an orthonormal basis.  The Frobenius characteristic
map  $\ch: R \rightarrow \La$ preserves the inner products and it
satisfies that
\begin{align}
\ch([S^{\la}]) &= s_{\la},   \label{eqn:ch.Specht}\\
\ch \big({\rm ind}^{\C S_n}_{\C S_{\la}}{\bf 1} \big) &= h_{\la}, \quad \la
\in\mc P_n,   \label{eqn:ch.perm.}
\end{align}
where $\bf 1$ denotes the trivial character.

\subsection{Hecke-Clifford algebra $\HC$ and the characteristic map $\ch^-$}
 \label{subsec:Hecke-Clifford}

A superalgebra $A =\ev A \oplus \od A$ satisfies $A_i
\cdot A_j \subseteq A_{i+j}$ for $i,j \in\Z_2$. Denote by
$\mathcal{C}_n$ the Clifford superalgebra generated by the odd
elements $c_1,\ldots,c_n$, subject to the relations
$c_i^2=1,c_ic_j=-c_jc_i$ for $1\leq i\neq j\leq n$. The symmetric
group $S_n$ acts as automorphisms on the Clifford algebra $\Cl$
by permuting its generators, and the Hecke-Clifford algebra is defined to be the
semi-direct product $\HC=\mathcal{C}_n\rtimes\C S_n$ with
$
\sigma c_i=c_{\sigma(i)}\sigma, \text{ for } \sigma\in S_n, 1\leq i\leq n.
$
Note that the algebra $\HC$ is naturally a superalgebra by letting each
$\sigma\in S_n$ be even and each $c_i$ be odd.

A module over a superalgebra, e.g.~$\HC$, is always understood to
be $\Z_2$-graded in this paper. It is known \cite{Jo, Se, St} (cf.
\cite{Kle}) that there exists an irreducible $\HC$-module $D^{\xi}$
for each strict partition $\xi\in\mc{SP}_n$ and $\{D^{\xi} \mid
\xi\in\mc{SP}_n\}$ forms a complete set of non-isomorphic
irreducible $\HC$-modules. A superalgebra analogue of Schur's Lemma
states that the endomorphism algebra of a finite dimensional
irreducible module over a superalgebra is either one dimensional or
two dimensional. It turns out that \cite{Jo, Se}
\begin{equation}\label{eqn:HCinner}
\dim\Hom_{\HC}(D^{\xi}, D^{\xi})
=2^{\delta(\xi)}.
\end{equation}

Denote by $\HC\text{-smod}$ the category of finite dimensional
$\HC$-supermodules. Let $R_n^-$ be the Grothendieck group of the
category $\HC\text{-smod}$ and define
 $$
 R^-=\bigoplus_{n\ge 0}R_n^-,\quad R^-_{\Q}=\Q\otimes_{\Z}R^-.
 $$
Recall the ring $\Gamma$ from \eqref{eq:Gamma} and set
$\Ga_{\Q}=\Q\otimes_{\Z}\Ga$. As a spin analogue of the Frobenius
characteristic map ${\rm ch}$, there exists an isomorphism of graded
vector spaces \cite{Jo}
\begin{align}
{\rm ch}^-: R^-_{\Q}&\longrightarrow \Ga_{\Q}\notag\\
[D^{\xi}]  &\mapsto 2^{-\frac{\ell(\xi)-\delta(\xi)}{2}}Q_{\xi},
   \label{eqn:spin.ch.irr.}\\
{\rm ind}^{\HC}_{\C S_{\mu}}{\bf 1}  &\mapsto q_{\mu}.
   \label{eqn:spin.ch.perm.}
\end{align}
It is useful to note that   ${\rm ch}^-$ is related to
${\rm ch}$ as follows:
\begin{equation}\label{eqn:spin.ch}
{\rm ch}^-(\zeta)={\rm ch} \big({\rm res}^{\HC}_{\C S_n}\zeta \big),
\quad \text{ for } \zeta\in R^-_n.
\end{equation}

\subsection{Spin Kostka polynomials and graded multiplicity}

Up to some $2$-power, $g_{\xi\la}$ has the following representation
theoretic interpretation.
\begin{lem}\label{lem:ind.Specht}
Suppose $\xi\in\mc{SP}_n, \la\in\mc{P}_n$. The following holds:
$$
\dim{\rm Hom}_{\HC}(D^{\xi},{\rm ind}^{\HC}_{\C S_n}S^{\la} )
%=2^{-\frac{\ell(\xi)-\delta(\xi)}{2}}b_{\xi\la}
= 2^{\frac{\ell(\xi) +\delta(\xi)}2} g_{\xi\la}.
$$

\end{lem}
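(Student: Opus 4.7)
The plan is to rewrite the $\HC$-Hom via Frobenius reciprocity as a $\C S_n$-Hom, and then extract the resulting multiplicity via the Frobenius characteristic map $\ch$.

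For the first step, I invoke the adjunction
\[
\Hom_{\HC}(D^\xi, {\rm ind}^{\HC}_{\C S_n} S^\la) \cong \Hom_{\C S_n}({\rm res}^{\HC}_{\C S_n} D^\xi, S^\la).
\]
This holds because $\HC = \Cl \rtimes \C S_n$ is free of rank $2^n$ over $\C S_n$ on both sides, and $\Cl$ is a symmetric Frobenius superalgebra, which together make $(\HC,\C S_n)$ a Frobenius extension so that induction agrees with coinduction. Since $S^\la$ is absolutely irreducible over $\C S_n$, the right-hand dimension is exactly the multiplicity of $S^\la$ in the $S_n$-module ${\rm res}^{\HC}_{\C S_n} D^\xi$.

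For the second step, I evaluate this multiplicity via $\ch$. Using the compatibility (\ref{eqn:spin.ch}) together with (\ref{eqn:spin.ch.irr.}),
\[
\ch\bigl([{\rm res}^{\HC}_{\C S_n} D^\xi]\bigr) = \ch^-([D^\xi]) = 2^{-(\ell(\xi)-\delta(\xi))/2}\,Q_\xi.
\]
Expanding $Q_\xi=\sum_\la b_{\xi\la}\,s_\la$ as in (\ref{eqn:QSchur}) and substituting $b_{\xi\la}=2^{\ell(\xi)}g_{\xi\la}$ from (\ref{eq:gb}), the right-hand side becomes $2^{(\ell(\xi)+\delta(\xi))/2}\sum_\la g_{\xi\la}\,s_\la$. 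Since $\ch$ sends $[S^\la]$ to $s_\la$ and $\{s_\la\}$ is linearly independent, the multiplicity of $S^\la$ in ${\rm res}^{\HC}_{\C S_n} D^\xi$ equals $2^{(\ell(\xi)+\delta(\xi))/2}\,g_{\xi\la}$, completing the argument.

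The main point requiring care is the Frobenius reciprocity in the super setting, i.e., that $\HC$ is a Frobenius extension of $\C S_n$; this is a standard consequence of $\Cl$ being symmetric (with the top-coefficient trace) and $S_n$-equivariant. Should one prefer to bypass this, the multiplicity of $D^\xi$ in ${\rm ind}^{\HC}_{\C S_n} S^\la$ can instead be read off on the $\Ga_\Q$ side from (\ref{eqn:spin.ch.perm.}) via the Kostka-unitriangular expansion of $[S^\la]$ in terms of induced trivials $[{\rm ind}^{\C S_n}_{\C S_\mu} \mathbf{1}]$, and then multiplied by $\dim\Hom_{\HC}(D^\xi,D^\xi)=2^{\delta(\xi)}$ from (\ref{eqn:HCinner}).
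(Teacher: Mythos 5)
Your proof is correct and follows essentially the same route as the paper: both reduce the statement to the multiplicity of $S^{\la}$ in ${\rm res}^{\HC}_{\C S_n}D^{\xi}$ and read it off from $\ch^-([D^{\xi}])=2^{-(\ell(\xi)-\delta(\xi))/2}Q_{\xi}$ expanded in Schur functions via \eqref{eqn:QSchur} and \eqref{eq:gb}. The only cosmetic difference is that the paper first swaps the two arguments of $\Hom$ using semisimplicity and then applies the standard (left-adjoint) Frobenius reciprocity, whereas you invoke the coinduction-equals-induction adjunction directly via the Frobenius extension property; both are valid.
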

\begin{proof}
Since the $\HC$-module ${\rm ind}^{\HC}_{\C S_n}S^{\la}$ is semisimple, we have
\begin{align*}
\dim\Hom_{\HC}(D^{\xi},{\rm ind}^{\HC}_{\C S_n}S^{\la} )
=&\dim\Hom_{\HC}({\rm ind}^{\HC}_{\C S_n}S^{\la}, D^{\xi})\\
=&\dim\Hom_{\C S_n}(S^{\la}, {\rm res}^{\HC}_{\C S_n}D^{\xi})\\
=&(s_{\la},{\rm ch}({\rm res}^{\HC}_{\C S_n}D^{\xi}))\\
=&(s_{\la},{\rm ch}^-(D^{\xi}))\\
=&( s_{\la},2^{-\frac{\ell(\xi)-\delta(\xi)}{2}}Q_{\xi}(x))\\
=&2^{\frac{\ell(\xi) +\delta(\xi)}2} g_{\xi\la},
\end{align*}
where the second equation uses the Frobenius reciprocity, the third
equation uses  the fact that ${\rm ch}$ is an isometry,  the fourth,
fifth and sixth  equations follow from~(\ref{eqn:spin.ch}),
\eqref{eqn:spin.ch.irr.} and  \eqref{eqn:QSchur}, respectively.
\end{proof}

\begin{proof}[Proof of Theorem~B(1)]
Suppose $\xi\in\mc{SP}_n$ and $\mu\in\mc P_n$. By
Proposition~\ref{prop:relate} and~ Theorem~\ref{thm:Kostka gr.mult},
we obtain that
\begin{align*}
K^-_{\xi\mu}(t)
&=\sum_{\la\in\mc P_n}b_{\xi\la}K_{\la\mu}(t)
=\sum_{\la\in\mc P_n}b_{\xi\la}C_{\la\mu}(t^{-1})t^{n(\mu)}.
\end{align*}
On the other hand, recalling the definition of
$C_{\xi\mu}^-(t)$ from~(\ref{eqn:mult.C-}) and  the definition of
$C_{\la\mu}(t)$ from~(\ref{eqn:mult.C}), we have by Lemma \ref{lem:ind.Specht} that
\begin{align*}
C^-_{\xi\mu}(t) &=\sum_{i\geq 0}t^i \left(\dim\Hom_{\HC} \big(D^{\xi},
{\rm ind}^{\HC}_{\C S_n} H^{2i}(\mc{B}_{\mu})\big)  \right)
 \\
 &=\sum_{\la} C_{\la\mu}(t) \dim\Hom_{\HC}(D^{\xi},
{\rm ind}^{\HC}_{\C S_n} S^\la)
 \\
&=2^{-\frac{\ell(\xi)-\delta(\xi)}{2}} \sum_{\la\in\mc  P_n}   b_{\xi\la}C_{\la\mu}(t).
\end{align*}
Now Theorem~B(1) follows by comparing the above two identities.
\end{proof}

With Theorem~B(1) at hand, we can complete the proof of Theorem~A.
\begin{proof}[Proof of Theorem A(6)]
Suppose $\xi\in\mc{SP}_n$. Observe that $\mc
B_{(1^n)}=\mc B$ and it is well known that $ H^{\bullet}(\mc{B})$ is
isomorphic to the coinvariant algebra of the symmetric group $S_n$.
Hence by \cite[Theorem 3.5]{WW} and (\ref{eqn:HCinner}) we obtain that

$$
C^-_{\xi(1^n)}(t)
=2^{-\frac{\ell(\xi)-\delta(\xi)}{2}}
\frac{t^{n(\xi)}(1-t)(1-t^{2})\cdots(1-t^{n})\prod_{(i,j)\in
\xi^*}(1+t^{c_{ij}})} {\prod_{(i,j)\in \xi^*}(1-t^{h^*_{ij}})},
$$ where $\xi^*$ is the shifted Young diagram associated to $\xi$ and
$c_{ij}, h^*_{ij}$ are contents and shifted hook lengths for a cell
$(i,j) \in \xi$, respectively.
This together with Theorem~B(1) gives rise to
\begin{align*}
K_{\xi(1^n)}^-(t)
&=\frac{t^{\frac{n(n-1)}{2}-n(\xi)}(1-t^{-1})(1-t^{-2})\cdots(1-t^{-n})\prod_{(i,j)\in
\xi^*}(1+t^{-c_{ij}})} {\prod_{(i,j)\in \xi^*}(1-t^{-h^*_{ij}})}\\
&=\frac{t^{-n-n(\xi)+\sum_{(i,j)\in \xi^*}h^*_{ij}}(1-t)(1-t^2)\cdots(1-t^n)\prod_{(i,j)\in
\xi^*}(1+t^{c_{ij}})} {t^{\sum_{(i,j)\in \xi^*}c_{ij}}\prod_{(i,j)\in \xi^*}(1-t^{h^*_{ij}})}\\
&=\frac{t^{n(\xi)}(1-t)(1-t^2)\cdots(1-t^n)\prod_{(i,j)\in
\xi^*}(1+t^{c_{ij}})} {\prod_{(i,j)\in \xi^*}(1-t^{h^*_{ij}})},
\end{align*}
where the last equality can be derived by noting that the contents
$c_{ij}$ are $0,1,\ldots,\xi_i-1$ and the fact (cf. \cite[III, \S
8, Example 12]{Mac}) that in the $i$th row of $\xi^*$, the hook
lengths $h^*_{ij}$ for $i\leq j\leq\xi_i+i-1$ are
$1,2,\ldots,\xi_i,\xi_i+\xi_{i+1},\xi_i+\xi_{i+2},\ldots,\xi_{i}+\xi_{\ell}$
with exception
$\xi_i-\xi_{i+1},\xi_i-\xi_{i+2},\ldots,\xi_{i}-\xi_{\ell}$.
\end{proof}

\subsection{Spin Kostka polynomials and $q$-weight multiplicity}
\label{sec:q-wt}

The queer Lie superalgebra, denoted by $\q(n)$, can be viewed as the
subalgebra of the general linear Lie superalgebra
$\mathfrak{gl}(n|n)$ consisting of matrices of the form
\begin{align}   \label{eqn:qmatrix}
\begin{pmatrix}
a&b\\
b&a
\end{pmatrix},
\end{align}
where $a$ and $b$ are arbitrary $n\times n$ matrices. Let $\g=\q(n)$
and $I(n|n) =\{\bar{1}, \ldots, \bar{n}, 1,\ldots, n\}$. The even
(respectively, odd) part $\ev\g$ (respectively, $\od\g$) consists of
those matrices of the form \eqref{eqn:qmatrix} with $b=0$
(respectively, $a=0$). Denote by $E_{ij}$ for $i,j \in I(n|n)$ the
standard elementary matrix with the $(i,j)$th entry being $1$ and
zero elsewhere. Fix the triangular decomposition
$$
\g=\n^-\oplus\h\oplus\n^+,
$$
where $\h$ (respectively, $\n^+$, $\n^-$) is the subalgebra of $\g$
which consists of matrices of the form ~(\ref{eqn:qmatrix})~ with
$a, b$ being arbitrary diagonal (respectively, upper triangular,
lower triangluar) matrices. Let $\mf{b}=\h\oplus\n^+$.

Let $\{\epsilon_i \mid i=1, \ldots, n\}$ be the basis dual to the
standard basis $\{E_{ii}+ E_{\bar{i},\bar{i}} \mid i=1, \ldots, n\}$
for the even subalgebra $\ev \h$ of $\h$, where $\ev \h$ is
identified with the standard Cartan subalgebra of $\mf{gl}(n)$ via
the natural isomorphism $\ev{\q(n)} \cong \mf{gl}(n)$. With respect
to $\ev\h$ we have the root space decomposition
$\g=\h\oplus\bigoplus_{\alpha\in\Delta}\g_\alpha$ with roots
$\Delta=\{\epsilon_i-\epsilon_j|1\le i\not=j\le n\}.
$
The set of
positive roots corresponding to the Borel subalgebra $\mf b$ is
$$
\Delta^+=\{\epsilon_i-\epsilon_j|1\le i<j\le n\}.
$$

Noting that $[\od\h,\od\h]=\ev\h$, the Lie superalgebra $\h$ is not
abelian. For $\la\in\sum^n_{i=1}\Z\epsilon_i\subset \ev\h^*$, define the
symmetric bilinear form $\langle\cdot,\cdot\rangle_\la$ on $\od\h$
by $\langle v,w\rangle_\la:=\la([v,w]). $ Let $\od\h'\subseteq\od\h$
be a maximal isotropic subspace and consider the subalgebra
$\h'=\ev\h\oplus\od\h'$.  The one-dimensional $\ev\h$-module $\C
v_\la$, defined by $hv_\la=\la(h)v_\la$, extends trivially to $\h'$.
The induced $\h$-module $W_\la:=\text{Ind}_{\h'}^{\h}\C v_\la$ is
irreducible. Extend $W_{\la}$ to representation of $\mf{b}$ by
letting $\n^+W_{\la}=0$. The induced $\g$-module  ${\rm
ind}^{\g}_{\mf{b}}W_{\la}$ has a unique irreducible quotient,
denoted by $V(\la)$. We have a weight space decomposition $V(\la)
=\bigoplus_{\mu} V(\la)_\mu$, where a weight $\mu$ can be identified
with a composition $(\mu_1, \ldots, \mu_n)$.

For $\xi\in\mc{SP}$ with $\ell(\xi)\leq n$, the $\q(n)$-module
$V(\xi)$ is finite dimensional. Moreover,  according to Sergeev
\cite{Se}, the character of $V(\xi)$  (by setting $x_i
=e^{\epsilon_i}$) is
\begin{equation}\label{eqn:queer.ch}
{\rm ch}V(\xi)=2^{-\frac{\ell(\xi)-\delta(\xi)}{2}}Q_{\xi}(x_1, \ldots, x_n).
\end{equation}
Regarding a regular nilpotent element $e$ in $\mf{gl}(n)$ as an even
element in $\q(n)$, we define a {\em Brylinski-Kostant filtration}
on the weight space $V(\xi)_{\mu}$ by
\begin{align*}
0\subseteq J_{e}^0(V(\xi)_{\mu})\subseteq J^1_{e}(V(\xi)_{\mu})\subseteq\cdots,
\end{align*}
where
\begin{align*}
%J_{e}^{-1}(V(\xi)_{\mu})=0,\quad
J_{e}^k(V(\xi)_{\mu}) :=\{v\in V(\xi)_{\mu} \mid e^{k+1}v=0\}.
\end{align*}
Define a polynomial  $\ga^-_{\xi\mu}(t)$ by
\begin{align*}
\ga^-_{\xi\mu}(t)=\sum_{k\geq 0}\Big(\dim
J_{e}^k(V(\xi)_{\mu})/J_{e}^{k-1}(V(\xi)_{\mu})\Big)t^k.
\end{align*}

Recall that $L(\la)$ denotes the irreducible representation of
$\mf{gl}(n)$ with highest weight $\la$ and recall $g_{\xi\la}$ from
\eqref{eq:gb}. Up to the same $2$-power as in
Lemma~\ref{lem:ind.Specht}, $g_{\xi\la}$ has the following
interpretation of branching coefficient.

\begin{lem}\label{lem:res.Vxi}
As  a $\mathfrak{gl}(n)$-module,  $V(\xi)$ can be decomposed as
$$
V(\xi) \cong\bigoplus_{\la\in\mc{P}, \ell(\la)\leq n}
2^{\frac{\ell(\xi) +\delta(\xi)}2} g_{\xi\la} L(\la).
$$
\end{lem}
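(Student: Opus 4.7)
The plan is to prove the lemma by a direct character computation, exploiting the fact that $V(\xi)$ is a finite-dimensional polynomial representation of $\q(n)$ and hence completely reducible upon restriction to its even subalgebra $\ev{\q(n)} \cong \mf{gl}(n)$.

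First I would observe that, since $V(\xi)$ is finite dimensional and its weights (with respect to $\ev\h$) are nonnegative integral combinations of the $\epsilon_i$, the restriction $V(\xi)\big|_{\mf{gl}(n)}$ is a finite-dimensional polynomial $\mf{gl}(n)$-module. Such modules are semisimple, so we can write
\begin{equation*}
V(\xi)\big|_{\mf{gl}(n)} \cong \bigoplus_{\la\in\mc P,\,\ell(\la)\leq n} m_{\xi\la}\, L(\la)
\end{equation*}
for some nonnegative integer multiplicities $m_{\xi\la}$, and it suffices to show $m_{\xi\la}=2^{(\ell(\xi)+\delta(\xi))/2} g_{\xi\la}$.

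Next I would compute the character on both sides. On the one hand, Sergeev's formula \eqref{eqn:queer.ch} gives
\begin{equation*}
\ch V(\xi) = 2^{-\frac{\ell(\xi)-\delta(\xi)}{2}} Q_\xi(x_1,\ldots,x_n),
\end{equation*}
while expanding $Q_\xi$ via \eqref{eqn:QSchur} and using $b_{\xi\la}=2^{\ell(\xi)} g_{\xi\la}$ from \eqref{eq:gb} yields
\begin{equation*}
\ch V(\xi) = 2^{-\frac{\ell(\xi)-\delta(\xi)}{2}} \sum_{\la} 2^{\ell(\xi)} g_{\xi\la}\, s_\la(x_1,\ldots,x_n) = 2^{\frac{\ell(\xi)+\delta(\xi)}{2}} \sum_{\la} g_{\xi\la}\, s_\la(x_1,\ldots,x_n).
\end{equation*}
On the other hand, the assumed decomposition gives $\ch V(\xi) = \sum_{\la} m_{\xi\la}\, s_\la(x_1,\ldots,x_n)$, since $\ch L(\la)=s_\la(x_1,\ldots,x_n)$. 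Comparing coefficients in the linearly independent family $\{s_\la(x_1,\ldots,x_n)\mid \ell(\la)\leq n\}$ yields the claim.

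The only mildly delicate point is the complete reducibility of $V(\xi)\big|_{\mf{gl}(n)}$, but this is standard since $V(\xi)$ is a finite-dimensional polynomial $\mf{gl}(n)$-module; alternatively, one can bypass semisimplicity entirely by noting that \eqref{eqn:queer.ch} together with Lemma \ref{lem:gxila} already expresses $\ch V(\xi)$ as a nonnegative integer combination of Schur polynomials, and any finite-dimensional polynomial $\mf{gl}(n)$-module is determined up to isomorphism by its character. No further work is needed; the bulk of the content has already been packaged into \eqref{eqn:queer.ch} and into the integrality of $g_{\xi\la}$ from Lemma \ref{lem:gxila}.
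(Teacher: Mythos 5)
Your argument is correct and is essentially the paper's own proof: the paper likewise reduces the lemma to a character identity and derives it from \eqref{eqn:QSchur}, \eqref{eq:gb}, and Sergeev's formula \eqref{eqn:queer.ch}, with $\ch L(\la)=s_\la$. Your added care about complete reducibility of the restriction to $\ev{\q(n)}\cong\mf{gl}(n)$ is a welcome explicit step that the paper leaves implicit (just avoid leaning on Lemma~\ref{lem:gxila} here, since the paper later reproves that lemma \emph{from} this one).
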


\begin{proof}
It suffices to verify on the character level. The corresponding
character identity indeed follows from   (\ref{eqn:QSchur}),
\eqref{eq:gb}  and (\ref{eqn:queer.ch}), as the character of
$L(\la)$  is given by the Schur function $s_{\la}$.
\end{proof}

Now we give a  proof of Lemma~\ref{lem:gxila} based on
representation theory of $\q(n)$ as promised. It is also possible to
give another proof based on representation theory of Hecke-Clifford
algebra $\HC$.

\begin{proof}[Proof of Lemma~\ref{lem:gxila}]
It follows by Lemma~\ref{lem:res.Vxi} that $g_{\xi\la} \ge 0$, and
moreover, $g_{\xi\la} =0$ unless $\xi \ge \la$ (the dominance order
for compositions coincide with the dominance order of weights for
$\q(n)$). The highest weight space for the $\q(n)$-module $V(\xi)$
is $W_\xi$, which has dimension $2^{\frac{\ell(\xi)
+\delta(\xi)}2}$. Hence, $g_{\xi\xi} =1$, by Lemma~\ref{lem:res.Vxi}
again.

By  (\ref{eqn:queer.ch}), $2^{-\frac{\ell(\xi) +\delta(\xi)}2} \ch
V(\xi) =2^{ -\ell(\xi)} Q(x_1, \ldots, x_n)$, which is known to lie
in $\La$, cf. \cite{Mac} (this fact can also  be seen directly
using representation theory of $\q(n)$). Hence, $2^{ -\ell(\xi)}
Q(x_1, \ldots, x_n)$ is a $\Z$-linear combination of Schur
polynomials $s_\la$. Combining with Lemma~\ref{lem:res.Vxi}, this
proves that $g_{\xi\la} \in \Z$.
\end{proof}

We are ready to establish the Lie theoretic interpretation of spin
Kostka polynomials.

\begin{proof}[Proof of Theorem~B(2)]
The Brylinski-Kostant filtration is defined via a regular nilpotent
element in $\mf{gl}(n) \cong \q(n)_{\bar{0}}$, and thus it is
compatible with the decomposition in Lemma~ \ref{lem:res.Vxi}. Hence,
we have
$$
J_{e}^k \big(V(\xi)_{\mu}\big) \cong \bigoplus_{\la}2^{\frac{\ell(\xi)
+\delta(\xi)}2} g_{\xi\la}J^k_{e} \big(L(\la)_{\mu} \big).
$$
It follows by the definitions of the polynomials $\ga^-_{\xi\mu}(t)$
and $\ga_{\la\mu}(t)$ that
$$
\ga^-_{\xi\mu}(t)
=\sum_{\la}2^{\frac{\ell(\xi) +\delta(\xi)}2}g_{\xi\la}\ga_{\la\mu}(t).
$$
Then by Theorem \ref{thm:jump} we obtain that
$$
\ga^-_{\xi\mu}(t)
=\sum_{\la}2^{\frac{\ell(\xi) +\delta(\xi)}2}g_{\xi\la}K_{\la\mu}(t)
=\sum_{\la}2^{-\frac{\ell(\xi) -\delta(\xi)}2}b_{\xi\la}K_{\la\mu}(t).
$$
This together with Proposition~\ref{prop:relate} proves
Theorem~B(2).
\end{proof}

The interpretation of spin Kostka polynomials as $q$-weight
multiplicity can take another form.
\begin{prop} \label{prop:spin q-weight}
Suppose $\xi\in\mc{SP}$ and $\mu\in\mc P$ with $\ell(\xi) \le n$ and
$\ell(\mu)\leq n$. Then we have
\begin{align*}
 K^-_{\xi\mu}(t)
=2^{\frac{\ell(\xi)-\delta(\xi)}2}  [e^\mu]
\frac{\prod_{\alpha\in\Delta^+}(1-e^{-\alpha})}{\prod_{\alpha\in\Delta^+}(1-te^{-\alpha})}
{\rm ch}V(\xi).
\end{align*}
\end{prop}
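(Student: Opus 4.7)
The plan is to reduce the identity to the classical Lusztig--Kato formula \eqref{eqn:q-weight} by using the $\mf{gl}(n)$-branching of the $\q(n)$-module $V(\xi)$ supplied by Lemma~\ref{lem:res.Vxi}, and then invoking Proposition~\ref{prop:relate} to recognize the resulting sum as $K^-_{\xi\mu}(t)$. All the main ingredients are already in place in the paper; the task is essentially to assemble them and check that the powers of $2$ match.

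First, I would substitute the decomposition from Lemma~\ref{lem:res.Vxi} into the character of $V(\xi)$ that appears on the right-hand side, giving
\[
{\rm ch}V(\xi) = 2^{\frac{\ell(\xi)+\delta(\xi)}{2}} \sum_{\la\in\mc P,\, \ell(\la)\le n} g_{\xi\la}\, {\rm ch}L(\la).
\]
Since the functional $[e^\mu]\frac{\prod_{\al\in\Delta^+}(1-e^{-\al})}{\prod_{\al\in\Delta^+}(1-te^{-\al})}(\cdot)$ is linear in its argument, it can be pulled inside the sum. The right-hand side of the proposition therefore equals
\[
2^{\frac{\ell(\xi)-\delta(\xi)}{2}}\cdot 2^{\frac{\ell(\xi)+\delta(\xi)}{2}} \sum_{\la} g_{\xi\la}\, m^{\la}_{\mu}(t) = 2^{\ell(\xi)} \sum_{\la} g_{\xi\la}\, m^{\la}_{\mu}(t),
\]
where $m^{\la}_{\mu}(t)$ is Lusztig's $q$-weight multiplicity for $\mf{gl}(n)$.

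Second, I would apply the Lusztig--Kato identity \eqref{eqn:q-weight} to rewrite $m^{\la}_{\mu}(t)=K_{\la\mu}(t)$, and use the defining relation $2^{\ell(\xi)}g_{\xi\la}=b_{\xi\la}$ from \eqref{eq:gb}. This reduces the right-hand side to $\sum_{\la} b_{\xi\la} K_{\la\mu}(t)$, which by Proposition~\ref{prop:relate} is precisely $K^-_{\xi\mu}(t)$.

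There is no real obstacle here: the representation-theoretic content has been absorbed into Lemma~\ref{lem:res.Vxi} (which packages the queer Weyl character formula \eqref{eqn:queer.ch} together with $Q_{\xi}=\sum_\la b_{\xi\la}s_\la$), while the classical $q$-weight content is absorbed into \eqref{eqn:q-weight}. The only point requiring care is the bookkeeping of the factors $2^{\frac{\ell(\xi)\pm\delta(\xi)}{2}}$, which combine to $2^{\ell(\xi)}$ and thereby convert $g_{\xi\la}$ into $b_{\xi\la}$, making the final appeal to Proposition~\ref{prop:relate} exact.
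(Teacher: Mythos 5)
Your proposal is correct and matches the paper's own proof: both arguments combine Lemma~\ref{lem:res.Vxi}, the Lusztig--Kato formula \eqref{eqn:q-weight}, and Proposition~\ref{prop:relate} with \eqref{eq:gb}, and the powers of $2$ are tracked correctly. The only difference is that you run the chain of equalities from the right-hand side to the left, whereas the paper goes the other way; this is not a substantive difference.
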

\begin{proof}
It follows from~(\ref{eqn:q-weight}) that
$$
K_{\la\mu}(t)=[e^{\mu}]
\frac{\prod_{\alpha\in\Delta^+}(1-e^{-\alpha})}{\prod_{\alpha\in\Delta^+}(1-te^{-\alpha})}
{\rm ch}L(\la)
$$
for $\la\in\mc P$ with $\ell(\la)\leq n$.
Hence by Propostion~\ref{prop:relate} and Lemma~\ref{lem:res.Vxi} one  deduces that
\begin{align*}
 K^-_{\xi\mu}(t)
&=\sum_{\la\vdash n}2^{\ell(\xi)}g_{\xi\la}\cdot [e^{\mu}]\frac{\prod_{\alpha\in\Delta^+}
(1-e^{-\alpha})}{\prod_{\alpha\in\Delta^+}(1-te^{-\alpha})}
{\rm ch}L(\la)
   \\
& =[e^{\mu}]\frac{\prod_{\alpha\in\Delta^+}(1-e^{-\alpha})}{\prod_{\alpha\in\Delta^+}(1-te^{-\alpha})}
\sum_{\la\in\mc P, \ell(\la)\leq n}2^{\ell(\xi)}g_{\xi\la}{\rm ch}L(\la)\\
%&=[e^{\mu}]\frac{\prod_{\alpha\in\Delta^+}(1-e^{-\alpha})}{\prod_{\alpha\in\Delta^+}(1-te^{-\alpha})}
%2^{\frac{\ell(\xi)-\delta(\xi)}{2}}{\rm ch}V(\xi)\\
&=2^{\frac{\ell(\xi)-\delta(\xi)}2}  [e^\mu]
\frac{\prod_{\alpha\in\Delta^+}(1-e^{-\alpha})}{\prod_{\alpha\in\Delta^+}(1-te^{-\alpha})}
{\rm ch}V(\xi).
\end{align*}
\end{proof}

%%%%%%%%%%%%%%%%%
%%%%%%%%%%%%%%%%%
\section{Spin Hall-Littlewood and spin Macdonald polynomials}
\label{sec:spin Hall}

In this section, we introduce the  spin Hall-Littlewood polynomials
and establish their main properties. We also formulate the
$q,t$-generalizations of spin Kostka polynomials and Macdonald
polynomials.

\subsection{A commutative diagram}

Recall a homomorphism $\varphi$ \cite[III, \S8, Example 10]{Mac}
defined by
\begin{align}
& \varphi: \La \longrightarrow\Ga,   \notag\\
\varphi (p_r) = & \left\{
 \begin{array}{cc}
 2p_r,  &\quad \text{ for $r$ odd}, \\
 0,  &\quad \quad  \text{ otherwise},
 \end{array}
 \right. \label{eqn:mapvarphi}
\end{align}
where $p_r$ denotes the $r$th power sum symmetric function.  Denote
$$
H(t)=\sum_{n \ge 0} h_n t^n =\prod_i \frac1{1-x_it} =\exp
\Big(\sum_{r \ge 1} \frac{p_r t^r}r \Big).
$$
Noting that $Q(t)$ from \eqref{eqn:qr} can be rewritten as
$$
Q(t) =\exp \Big(2 \sum_{r \ge 1, r \text{ odd}} \frac{p_r t^r}r \Big),
$$
we obtain that
\begin{equation}   \label{eq:HQ}
\varphi \big(H(t) \big) =Q(t).
\end{equation}
Hence, we have $\varphi (h_n) =q_n$ for all $n$, and
 \begin{equation}\label{eqn:hq}
 \varphi(h_{\mu})=q_{\mu}, \quad \forall \mu \in \mc P.
 \end{equation}

For each $n \ge 0$, we define a functor
\begin{equation*}
\Phi_n: S_n\text{-mod}  \longrightarrow \HC\text{-smod}
\end{equation*}
by sending $M$ to ${\rm ind}^{\HC}_{\C S_n} M.$ Such a sequence of
functors $\{\Phi_n\}$ induces a $\Z$-linear map on the Grothendick
group level:
$$
\Phi: R \longrightarrow  R^-,
$$
by letting $\Phi([M]) = [\Phi_n(M)]$ for $M \in S_n\text{-mod} $. We
shall show that the map $\Phi: R\rightarrow R^-$ (or the sequence
$\{\Phi_n\}$) is a categorification of  $\varphi:\La\rightarrow\Ga$.

Recall that $R$ carries a natural Hopf algebra structure with
multiplication given by induction and comultiplication given by
restriction \cite{Ze}. In the same fashion, we can define a Hopf
algebra structure on $R^-$ by induction and restriction. On the
other hand, $\La_\Q \cong \Q[p_1, p_2, p_3, \ldots]$ is naturally a
Hopf algebra, where each $p_r$ is a primitive element, and $\Ga_\Q
\cong \Q[p_1, p_3, p_5, \ldots]$ is naturally a Hopf subalgebra of
$\La_\Q$. The characteristic map $\ch: R_\Q \rightarrow \La_\Q$ is
an isomorphism of Hopf algebras (cf. \cite{Ze}). A similar argument
easily leads to the following.

\begin{lem}
The map $\ch^-: R^-_\Q \rightarrow \Ga_\Q$
is an isomorphism of Hopf algebras.
\end{lem}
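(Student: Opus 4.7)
The plan is to adapt Zelevinsky's classical argument for $\ch\colon R_\Q \to \La_\Q$ \cite{Ze} to the super setting. Since $\ch^-$ is already known to be an isomorphism of graded $\Q$-vector spaces \cite{Jo}, the only remaining task is to verify that it respects multiplication and comultiplication; compatibility of antipodes is then automatic on a connected graded Hopf algebra.

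For the algebra homomorphism property, I would first argue that the classes $x_n := [{\rm ind}^{\HC_n}_{\C S_n}{\bf 1}]$ generate $R^-_\Q$. Indeed, \eqref{eqn:spin.ch.perm.} gives ${\rm ch}^-(x_n) = q_n$, and since $\{q_n\}_{n \ge 1}$ generate $\Ga_\Q$ as a $\Q$-algebra while ${\rm ch}^-$ is bijective, the $\{x_n\}$ generate $R^-_\Q$. Compatibility of products on these generators would then follow from transitivity of induction,
$$
x_m \cdot x_n
= \bigl[{\rm ind}^{\HC_{m+n}}_{\HC_m \hat\otimes \HC_n}\bigl({\rm ind}^{\HC_m}_{\C S_m}{\bf 1} \,\hat\otimes\, {\rm ind}^{\HC_n}_{\C S_n}{\bf 1}\bigr)\bigr]
= \bigl[{\rm ind}^{\HC_{m+n}}_{\C S_m \times \C S_n}{\bf 1}\bigr],
$$
combined with \eqref{eqn:spin.ch.perm.} applied to $\mu = (m,n)$, yielding ${\rm ch}^-(x_m \cdot x_n) = q_{(m,n)} = q_m q_n$.

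Next, for the coalgebra structure, once ${\rm ch}^-$ is known to be a ring map, both $\Delta \circ {\rm ch}^-$ and $({\rm ch}^- \otimes {\rm ch}^-)\circ \Delta$ are algebra homomorphisms, so I would only need to verify their agreement on the algebra generators $x_n$. On the $\Ga_\Q$ side, $\Delta(q_n) = \sum_{k+l=n} q_k \otimes q_l$, since $Q(u) = \exp(2\sum_{r\text{ odd}} p_r u^r/r)$ is group-like (each $p_r$ being primitive in $\Ga_\Q$). On the $R^-_\Q$ side, the identification ${\rm ind}^{\HC_n}_{\C S_n}{\bf 1} \cong \mathcal{C}_n$ of $\HC_n$-modules together with the superalgebra decomposition $\mathcal{C}_n \cong \mathcal{C}_k \,\hat\otimes\, \mathcal{C}_l$ would yield ${\rm res}^{\HC_n}_{\HC_k \hat\otimes \HC_l}\, x_n \cong x_k \,\hat\otimes\, x_l$ for each split $n = k + l$, matching $\Delta(q_n)$ summand by summand.

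The main preparatory obstacle is confirming that $R^-$ actually is a bialgebra, i.e., that its induction product and restriction coproduct are compatible in the Hopf sense. This reduces to a Mackey-type double coset decomposition for the embeddings $\HC_k \hat\otimes \HC_l \hookrightarrow \HC_n$, parallel to the classical Mackey formula for $S_k \times S_l \hookrightarrow S_n$ used in \cite{Ze}, with additional bookkeeping for the Clifford signs produced by the super tensor product.
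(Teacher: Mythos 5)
Your proposal is correct and is essentially the paper's own argument: the paper offers no details beyond ``a similar argument [to Zelevinsky's for $\ch$] easily leads to the following,'' and your write-up is precisely that adaptation, checking multiplicativity on the permutation modules via \eqref{eqn:spin.ch.perm.} and transitivity of induction, and comultiplicativity on the generators $q_n$ via the Clifford-algebra factorization. The only caveat is presentational: the claim that the $x_n$ generate $R^-_\Q$ already relies on \eqref{eqn:spin.ch.perm.} applied to products (not merely on bijectivity of $\ch^-$), and the bialgebra axiom for $R^-$ that you flag is likewise taken for granted by the paper.
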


\begin{prop}\label{prop:commute}
The map $\Phi: R_\Q \rightarrow R^-_\Q$ is a homomorphism of Hopf
algebras. Moreover, we have the following  commutative diagram of
Hopf algebras:
\begin{eqnarray}  \label{eq:CD}
\begin{CD}
 R_\Q @>\Phi>> R^-_\Q  \\
 @V\text{ch}V{\cong}V @V\ch^-V{\cong}V \\
 \La_\Q  @>\varphi>> \Ga_\Q
  \end{CD}
\end{eqnarray}
\end{prop}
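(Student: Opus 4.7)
The plan is to first check that $\varphi$ is itself a Hopf algebra homomorphism, then verify commutativity of the square on a convenient $\Q$-basis of $R_\Q$, and finally deduce that $\Phi$ is a Hopf algebra homomorphism from the commutativity combined with the fact that $\ch$ and $\ch^-$ are Hopf algebra isomorphisms (the latter by the preceding lemma). This bootstrapping strategy sidesteps a direct Mackey-theoretic verification that induction along $\C S_n \hookrightarrow \HC$ is compatible with the coproducts.

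For the Hopf structure of $\varphi$, I would view $\La_\Q = \Q[p_1, p_2, \ldots]$ and $\Ga_\Q = \Q[p_1, p_3, p_5, \ldots]$, with each $p_r$ a primitive element; the latter is a Hopf subalgebra of the former precisely because odd power sums are primitive. Multiplicativity of $\varphi$ is immediate from \eqref{eqn:mapvarphi}, and compatibility with comultiplication is checked on the generators $p_r$: both $\Delta(\varphi(p_r))$ and $(\varphi\otimes\varphi)(\Delta(p_r))$ equal $2 p_r\otimes 1 + 2\cdot 1\otimes p_r$ for $r$ odd and $0$ for $r$ even.

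To establish commutativity of \eqref{eq:CD}, I would evaluate both compositions on the classes $\bigl[\text{ind}^{\C S_n}_{\C S_\mu}\mathbf{1}\bigr]$ for $\mu\in\mc P$, which form a $\Q$-basis of $R_\Q$ since by \eqref{eqn:ch.perm.} they map under $\ch$ to the basis $\{h_\mu\}$ of $\La_\Q$. By transitivity of induction,
$$
\Phi\bigl(\bigl[\text{ind}^{\C S_n}_{\C S_\mu}\mathbf{1}\bigr]\bigr) = \bigl[\text{ind}^{\HC}_{\C S_\mu}\mathbf{1}\bigr],
$$
to which \eqref{eqn:spin.ch.perm.} assigns $q_\mu$ under $\ch^-$; going down-then-right produces $\varphi(h_\mu) = q_\mu$ by \eqref{eqn:hq}. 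Thus the diagram commutes, and rewriting $\Phi = (\ch^-)^{-1}\circ\varphi\circ\ch$ exhibits $\Phi$ as a composition of Hopf algebra maps, hence itself a Hopf algebra homomorphism.

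There is no genuine obstacle here. The crucial identity $\varphi(h_\mu) = q_\mu$ has already been recorded as \eqref{eqn:hq}, distilled from \eqref{eq:HQ}, and everything else is bookkeeping once one has in hand the preceding lemma stating that $\ch^-$ is a Hopf algebra isomorphism. What makes the argument clean is the choice of the basis of induced trivials, on which both sides of \eqref{eq:CD} can be evaluated transparently using transitivity of induction.
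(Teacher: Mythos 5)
Your proposal is correct and follows essentially the same route as the paper: verify commutativity of the square on the basis of classes of permutation modules $\mathrm{ind}^{\C S_n}_{\C S_\mu}\mathbf{1}$ using \eqref{eqn:ch.perm.}, \eqref{eqn:spin.ch.perm.} and \eqref{eqn:hq}, check that $\varphi$ respects the coproduct on the primitive generators $p_r$, and then deduce that $\Phi$ is a Hopf algebra homomorphism from the commutative diagram together with the fact that $\ch$ and $\ch^-$ are Hopf algebra isomorphisms. No substantive differences.
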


\begin{proof}Using~(\ref{eqn:ch.perm.})~and~(\ref{eqn:hq}) we have
$$
\varphi \big({\rm ch}({\rm ind}^{\C S_n}_{\C S_{\mu}}\textbf{1}) \big)=q_{\mu}.
$$
On the other hand, it follows by~(\ref{eqn:spin.ch.perm.}) that
$$
{\rm ch}^-\big(\Phi({\rm ind}^{\C S_n}_{\C S_{\mu}}\textbf{1}) \big)
={\rm ch}^-({\rm ind}^{\HC}_{\C S_{\mu}}{\bf 1})=q_{\mu}.
$$
This establishes the commutative diagram on the level of linear maps,
since  $R_n$ has a basis given by the characters of the permutation modules
${\rm ind}^{\C S_n}_{\C S_{\mu}}{\bf 1}$ for $\mu\in\mc P_n$.

It can be verified easily that $\varphi: \La_\Q \rightarrow \Ga_\Q$
is a homomorphism of Hopf algebras. Let us check that $\varphi$
commutes with the comultiplication $\Delta$.
\begin{align*}
\Delta(\varphi(p_r)) &=\Delta (2p_r) =2(p_r \otimes 1 + 1 \otimes
p_r) =(\varphi \otimes \varphi) (\Delta(p_r)), \quad \text{for odd }
r.
 \\
\Delta(\varphi(p_r)) &= 0=(\varphi \otimes \varphi) (\Delta(p_r)),
\quad \text{for even } r.
\end{align*}
Since both $\ch$ and $\ch^-$ are isomorphisms of Hopf algebras, it
follows from the commutativity of \eqref{eq:CD} that $\Phi: R_\Q
\rightarrow R^-_\Q$ is a homomorphism of Hopf algebras.
\end{proof}

\subsection{Spin Hall-Littlewood functions}
Denote by $H_{\mu}(x;t)$ the basis of $\La$ dual to the
Hall-Littlwood functions $P_{\mu}(x;t)$ with respect to the standard
inner product $(\cdot, \cdot)$ on $\La$ such that Schur functions
form an orthonormal basis. It follows by the Cauchy identity and
\eqref{eqn:Kostka} that
\begin{align}
\prod_{i,j}\frac{1}{1-x_iy_j}&=\sum_{\mu}H_{\mu}(x;t)P_{\mu}(y;t),
  \label{eqn:Cauchy}\\
H_{\mu}(x;t)&=\sum_{\la\in\mc P}K_{\la\mu}(t)s_{\la}(x).   \label{eqn:dual.Hall}
\end{align}
Recall that in $\la$-ring formalism, the symmetric functions in
$x(1-t)$ are defined in terms of $p_k((1-t)x)=(1-t^k)p_k(x)$.
Actually, the symmetric functions $P_{\mu}(x;t)$ and $H_{\mu}(x;t)$
are related to each other as (cf.  \cite{DLT})
\begin{equation}\label{eqn:Hall-Littlewood PH}
P_{\mu}(x;t)=\frac{1}{b_{\mu}(t)}H_{\mu}((1-t)x;t),
\end{equation}
where $b_{\mu}(t)=\prod_{i\geq 1}\prod^{m_i(\mu)}_{k=1}(1-t^k)$ and
$m_i(\mu)$ denotes the number of times $i$ occurs as a part of $\mu$.

\begin{defn}
Define the {\em spin Hall-Littlewood function} $H^-_{\mu}(x;t)$ for
$\mu\in\mc P$  by
\begin{equation}\label{eq:defn spin Hall}
H^-_{\mu}(x;t)
=\sum_{\xi\in\mc{SP}}2^{-\ell(\xi)}K_{\xi\mu}^-(t)Q_{\xi}(x).
\end{equation}
\end{defn}

For $\la\in\mc P$, let $S_{\la} \in\Ga$ be the determinant (cf. \cite[III, \S8, 7(a)]{Mac})
$$
S_{\la}
={\rm det}(q_{\la_i-i+j}).
$$
It follows by  the Jacobi-Trudi identity for $s_\la$ and \eqref{eqn:hq} that
\begin{equation}\label{eqn:img.Schur}
\varphi(s_{\la})=S_{\la}.
\end{equation}
Applying $\varphi$ to the Cauchy identity
$
\prod_{i ,j}\frac{1}{1-x_iy_j}=\sum_{\la\in\mc P}s_{\la}(x)s_{\la}(y)
$
 and using \eqref{eq:HQ} with $t =y_i$, we obtain that
\begin{equation}\label{eqn:Cauchy2}
\prod_{i, j\geq 1}\frac{1+x_iy_j}{1-x_iy_j}=\sum_{\la\in\mc P}S_{\la}(x)s_{\la}(y).
\end{equation}
It follows by the commutative diagram \eqref{eq:CD} and
\eqref{eqn:img.Schur} that $S_{\la}(x)={\rm ch}(\Cl\otimes
S^{\la})$. This recovers and provides a representation theoretic
context for \cite[III, \S8, 7(c)]{Mac}, as the Clifford algebra
$\Cl$ is isomorphic to the exterior algebra $\wedge(\C^n)$ as
$S_n$-modules.

\begin{thm}   \label{th:spinHL}
The  spin Hall-Littlewood functions $H^-_{\mu}(x;t)$ for $\mu\in\mc
P$ satisfy the following properties:
\begin{enumerate}
\item
$\varphi(H_{\mu}(x;t))=H^-_{\mu}(x;t).$

\item
$H^-_{\mu}(x;1)=q_{\mu}(x).$

\item
$H^-_{\mu}(x;0)=S_{\mu}(x).$

\item
$H^-_{\mu}(x;-1)
= \left \{
 \begin{array}{ll}
Q_{\mu}(x), & \text{ if } \mu\in\mc{SP}, \\
 0, & \text{ otherwise }.
 \end{array}
 \right.
$

\item
$H^-_{\mu}(x;t)\in\mathbb{Z}[t]\otimes_\Z \Gamma, \forall \mu \in
\mc P$; $\{H^-_{\xi}(x;t)~|~\xi\in\mc{SP}\}$ forms a basis of
$\mathbb{Z}[t]\otimes_\Z \Gamma$.

\item
$
\ds\prod_{i, j}\frac{1+x_iy_j}{1-x_iy_j}
=\sum_{\mu\in\mc P}H^-_{\mu}(x;t)P_{\mu}(y;t).
$
\end{enumerate}
\end{thm}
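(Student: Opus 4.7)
The plan is to prove (1) first, after which (2)--(4) fall out by direct substitution and (6) follows by pushing the classical Cauchy identity through $\varphi$. For (1), I would start from Proposition~\ref{prop:relate}, which rewrites $K^-_{\xi\mu}(t) = \sum_\la b_{\xi\la} K_{\la\mu}(t)$. Substituting into the definition~\eqref{eq:defn spin Hall} and using $g_{\xi\la} = 2^{-\ell(\xi)} b_{\xi\la}$, I rewrite
\[
H^-_\mu(x;t) = \sum_{\la\in\mc P} K_{\la\mu}(t) \Big(\sum_{\xi\in\mc{SP}} g_{\xi\la} Q_\xi(x)\Big).
\]
On the other hand, expanding $H_\mu(x;t)$ via \eqref{eqn:dual.Hall} and applying \eqref{eqn:img.Schur} gives $\varphi(H_\mu(x;t)) = \sum_\la K_{\la\mu}(t) S_\la$. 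Identity (1) therefore reduces to the key claim $S_\la(x) = \sum_{\xi\in\mc{SP}} g_{\xi\la} Q_\xi(x)$, which I would prove by comparing two expansions of the kernel $\prod_{i,j} (1+x_iy_j)/(1-x_iy_j)$: the dual Schur form \eqref{eqn:Cauchy2} and the classical Cauchy identity $\sum_\xi 2^{-\ell(\xi)} Q_\xi(x) Q_\xi(y)$ for Schur $Q$-functions; expanding $Q_\xi(y) = \sum_\la b_{\xi\la} s_\la(y)$ and matching coefficients of $s_\la(y)$ yields the claim.

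With (1) in hand, parts (2) and (3) follow by the classical specializations $H_\mu(x;1) = h_\mu(x)$ and $H_\mu(x;0) = s_\mu(x)$, combined with $\varphi(h_\mu) = q_\mu$ from \eqref{eqn:hq} and $\varphi(s_\mu) = S_\mu$ from \eqref{eqn:img.Schur}. For (4), I would bypass the classical side and substitute $t = -1$ directly in the definition \eqref{eq:defn spin Hall}; Theorem~A(4) gives $K^-_{\xi\mu}(-1) = 2^{\ell(\xi)} \delta_{\xi\mu}$, so the sum collapses to $Q_\mu$ when $\mu \in \mc{SP}$ and to $0$ otherwise.

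For (5), membership in $\Z[t] \otimes_\Z \Ga$ is immediate from Theorem~A(3), which guarantees $2^{-\ell(\xi)} K^-_{\xi\mu}(t) \in \Z[t]$. The basis statement follows from the dominance-order unitriangularity of the change of basis: by Theorem~A(1), restricting $\mu$ to $\mc{SP}$ gives $H^-_\mu(x;t) = Q_\mu(x) + \sum_{\xi \in \mc{SP},\, \xi > \mu} 2^{-\ell(\xi)} K^-_{\xi\mu}(t) Q_\xi(x)$, and a unitriangular transition matrix over $\Z[t]$ is invertible. Finally, for (6) I would apply $\varphi$ in the $x$-variables to \eqref{eqn:Cauchy}: the right-hand side becomes $\sum_\mu H^-_\mu(x;t) P_\mu(y;t)$ by (1), while the left-hand side, via its exponential form $\exp(\sum_r p_r(x) p_r(y)/r)$ and \eqref{eqn:mapvarphi}, transforms into $\exp(\sum_{r \text{ odd}} 2 p_r(x) p_r(y)/r) = \prod_{i,j}(1+x_iy_j)/(1-x_iy_j)$. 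The main obstacle is the intermediate identity $S_\la = \sum_\xi g_{\xi\la} Q_\xi$ underlying (1); once it is secured (either by the Cauchy-identity comparison above, or representation-theoretically by computing $\ch^-$ of the decomposition of $\mathrm{ind}^{\HC}_{\C S_n} S^\la$ via Lemma~\ref{lem:ind.Specht}), everything else is a short substitution or functorial pushforward.
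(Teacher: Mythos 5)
Your proposal is correct and follows essentially the same route as the paper: both hinge on the identity $S_{\la}(x)=\sum_{\xi\in\mc{SP}}g_{\xi\la}Q_{\xi}(x)$ obtained by comparing \eqref{eqn:Cauchy2} with the Cauchy identity for Schur $Q$-functions, and the remaining parts are handled by the same specializations, the same unitriangularity argument for (5), and the same application of $\varphi$ to \eqref{eqn:Cauchy} for (6).
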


\begin{proof}
By \eqref{eqn:Cauchy2} and the Cauchy identity for Schur $Q$-functions, we have
\begin{equation*}
%\prod_{i, j\geq 1}\frac{1+x_iy_j}{1-x_iy_j}
\sum_{\xi \in \mc{SP}} 2^{-\ell(\xi)}
 Q_\xi(x) Q_\xi(y) =\sum_{\la\in\mc P}S_{\la}(x)s_{\la}(y).
\end{equation*}
Substituting with  $Q_\xi(y)  =\sum_{\la \in \mc{P}} 2^{\ell(\xi)}
g_{\xi\la} s_\la(y)$ in the above equation, we obtain that
\begin{equation}
S_{\la}(x) =\sum_{\xi\in\mc{SP}}g_{\xi\la}Q_{\xi}(x).    \label{eqn:SlaQxi}
\end{equation}

Part~(1) can now be proved using  (\ref{eqn:dual.Hall}),  (\ref{eqn:img.Schur}),
(\ref{eqn:SlaQxi}), Proposition~\ref{prop:relate} and \eqref{eq:defn spin Hall}:
\begin{align*}
\varphi(H_{\mu}(x;t))=&\sum_{\la\in\mc P}K_{\la\mu}(t)S_{\la}(x)\\
=&\sum_{\xi\in\mc{SP},\la\in\mc{P}}g_{\xi\la}K_{\la\mu}(t)Q_{\xi}(x)\\
=&\sum_{\xi\in\mc{SP}}2^{-\ell(\xi)}K^-_{\xi\mu}(t) Q_{\xi}(x)\\
=&H^-_{\mu}(x;t).
\end{align*}

Since $H_{\mu}(x;0)=s_{\mu}$ and $H_{\mu}(x;1)=h_{\mu}$,  (2) and
(3)  follow from (\ref{eqn:hq}), (\ref{eqn:img.Schur}) and (1).
Also,  (4) follows by Theorem~ A(4) and the definition of
$H^-_{\mu}(x;t)$.

We have $2^{-\ell(\xi)}K^-_{\xi\mu}(t)\in\Z[t]$ by Theorem~A(3)  and
$Q_{\xi}(x)\in\Ga$, and hence by \eqref{eq:defn spin Hall},
$H^-_{\mu}(x;t)\in\mathbb{Z}[t]\otimes_\Z \Gamma$. By \eqref{eq:defn
spin Hall} and Theorem~A(1)(3), the transition matrix between
$\{H^-_{\xi}(x;t) \mid \xi\in\mc{SP}\}$ and $\{Q_{\xi}(x) \mid
\xi\in\mc{SP}\}$ is unital upper triangular with entries in $\Z[t]$.
Therefore, $\{H^-_{\xi}(x;t) \mid \xi\in\mc{SP}\}$  forms a basis of
$\mathbb{Z}[t]\otimes_\Z \Gamma$ since so does $\{Q_{\xi}(x) \mid
\xi\in\mc{SP}\}$. This proves (5).

(6) follows   by applying the map $\varphi$ to both sides of (\ref{eqn:Cauchy})
in $x$ variables and using \eqref{eq:HQ} and (1).
\end{proof}

\subsection{Spin Macdonald polynomials and spin $q,t$-Kostka polynomials}

Denote by $H_\la(x;q,t)$ the normalized Macdonald polynomials, which
is related to the Macdonald's integral form $J_\la(x;q,t)$ by
$$
H_\la (x;q,t) =J(x/(1-t);q,t)
$$
in $\la$-ring notation (cf. \cite{Mac, GH}). Inspired by
Theorem~\ref{th:spinHL}(1), we make the following.

\begin{defn}
The {\em spin Macdonald polynomials} $H^-_\mu (x;q,t)$ for $\mu \in
\mc P$ is given by
$$
H^-_\mu (x;q,t) =\varphi (H_\mu (x;q,t)).
$$
The {\em spin $q,t$-Kostka polynomials} $K^-_{\xi\mu}(q,t)$ for
$\xi\in\mc {SP}$ and $\mu\in\mc P$ are given by
\begin{equation*}
H^-_{\mu}(x;q,t)
=\sum_{\xi\in\mc{SP}}2^{-\ell(\xi)}K_{\xi\mu}^-(q,t)Q_{\xi}(x).
\end{equation*}
\end{defn}
Compare with \eqref{eqn:dual.Hall} for Kostka polynomials and
\eqref{eq:defn spin Hall} for spin Kostka polynomials.

The classical $q,t$-Kostka polynomial $K_{\la\mu}(q,t)$ can be
characterized as follows:
\begin{equation}   \label{eq:qtK}
H_{\mu}(x;q,t) =\sum_{\la\in\mc{P}} K_{\la\mu}(q,t) s_{\la}(x).
\end{equation}
According to Garsia and Haiman \cite{GH, Hai}, there is a
$\Z_+\times \Z_+$-graded regular representation $R_\mu$ of $S_n$,
parameterized by $\mu \in \mc P_n$:
$$
R_\mu = \bigoplus_{i,j \ge 0} R^{i,j}_\mu,
$$
such that
\begin{equation}  \label{eq:Kqt grmult}
\sum_{i, j\geq 0}\dim \Hom_{S_n} (S^\la, R^{i,j}_\mu) q^j t^i = K_{\la\mu} (q,t^{-1})t^{n(\mu)}.
\end{equation}
In particular, this established a conjecture of Macdonald \cite{Mac}
that $K_{\la\mu}(q,t) \in \Z_+[q,t]$.

For $\mu \in \mc P_n$ we consider the doubly graded $\HC$-module
$
\Phi_n (R_\mu) =\Cl \otimes R_\mu,
$
and set
$$
C^-_{\xi\mu}(q,t) := \sum_{i, j\geq 0}\dim \Hom_{\HC} (D^\xi, \Cl
\otimes R^{i,j}_\mu) q^j t^i.
$$
\begin{prop}  \label{prop:Mac}
The following identities hold for $\xi\in\mc{SP}$ and $\mu\in\mc P$:
\begin{align}
K^-_{\xi\mu}(q,t) & =\sum_{\la\in\mc P}b_{\xi\la}K_{\la\mu}(q,t).
   \label{eq:Kqt-}  \\
C^-_{\xi\mu}(q,t)  &= 2^{- \frac{\ell(\xi)-\delta(\xi)}{2}}
K^-_{\xi\mu} (q,t^{-1})t^{n(\mu)}.
  \label{eq:Kqt-mult}
\end{align}
\end{prop}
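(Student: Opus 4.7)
The plan is to mirror the two-step argument used for Proposition~\ref{prop:relate} and Theorem~B(1), replacing the single parameter $t$ by the pair $(q,t)$ at the appropriate places. Nothing in the combinatorial scaffolding --- the commutative diagram \eqref{eq:CD}, the expansion \eqref{eqn:img.Schur} of $\varphi(s_\la)$, the identity \eqref{eqn:SlaQxi} expressing $S_\la$ in the $Q$-basis, and Lemma~\ref{lem:ind.Specht} --- depends on $t$, so these tools are available off the shelf.

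For \eqref{eq:Kqt-}, I would start from the classical characterization \eqref{eq:qtK}, apply the homomorphism $\varphi$, and use \eqref{eqn:img.Schur} to obtain $H^-_\mu(x;q,t)=\sum_{\la\in\mc P}K_{\la\mu}(q,t)\,S_\la(x)$. Substituting \eqref{eqn:SlaQxi} rewrites this as $\sum_{\xi\in\mc{SP}}\bigl(\sum_{\la}g_{\xi\la}K_{\la\mu}(q,t)\bigr)Q_\xi(x)$. Comparing with the defining expansion of $K^-_{\xi\mu}(q,t)$ and using $g_{\xi\la}=2^{-\ell(\xi)}b_{\xi\la}$ from \eqref{eq:gb} then yields \eqref{eq:Kqt-}. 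This is the direct $(q,t)$-analogue of the proof of Proposition~\ref{prop:relate}.

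For \eqref{eq:Kqt-mult}, I would work degree by degree in the bigrading of $R_\mu$. For each fixed $(i,j)$ the $S_n$-module $R^{i,j}_\mu$ is semisimple, so the calculation in Lemma~\ref{lem:ind.Specht} applies verbatim with $S^\la$ replaced by $R^{i,j}_\mu$ and gives
\[
\dim\Hom_{\HC}\bigl(D^\xi,\,\Cl\otimes R^{i,j}_\mu\bigr)
=2^{\frac{\ell(\xi)+\delta(\xi)}{2}}\sum_{\la\in\mc P}g_{\xi\la}\,\dim\Hom_{S_n}(S^\la,R^{i,j}_\mu).
\]
Multiplying by $q^jt^i$, summing over $i,j$, and invoking the Garsia-Haiman identity \eqref{eq:Kqt grmult} then collapses the inner sum to $K_{\la\mu}(q,t^{-1})t^{n(\mu)}$, leaving
\[
C^-_{\xi\mu}(q,t)
=2^{\frac{\ell(\xi)+\delta(\xi)}{2}}\sum_{\la\in\mc P}g_{\xi\la}\,K_{\la\mu}(q,t^{-1})\,t^{n(\mu)}
=2^{-\frac{\ell(\xi)-\delta(\xi)}{2}}\sum_{\la\in\mc P}b_{\xi\la}\,K_{\la\mu}(q,t^{-1})\,t^{n(\mu)},
\]
after which the already-proved \eqref{eq:Kqt-} (applied at $t^{-1}$) identifies the final sum with $K^-_{\xi\mu}(q,t^{-1})$.

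The proposition is really a formal consequence of the Garsia--Haiman theorem and the machinery developed in Sections~\ref{sec:ProofA}--\ref{sec:ProofBC}, so there is no substantial obstacle. The one genuine point to verify is the degree-wise application of Lemma~\ref{lem:ind.Specht}: this amounts to noting that the induction functor $\mathrm{ind}^{\HC}_{\C S_n}=\Cl\otimes(-)$ is exact and preserves the $\Z_+\times\Z_+$-bigrading on $R_\mu$, so that Frobenius reciprocity can be invoked in each bidegree separately. Once that observation is recorded, the rest is essentially bookkeeping with the $2$-powers governed by \eqref{eq:gb} and \eqref{eqn:HCinner}.
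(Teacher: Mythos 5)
Your proposal is correct and follows essentially the same route as the paper: part \eqref{eq:Kqt-} by applying $\varphi$ to \eqref{eq:qtK} and using \eqref{eqn:img.Schur} and \eqref{eqn:SlaQxi}, and part \eqref{eq:Kqt-mult} by expanding $C^-_{\xi\mu}(q,t)$ over the bigraded pieces of $R_\mu$, invoking Lemma~\ref{lem:ind.Specht} together with the Garsia--Haiman identity \eqref{eq:Kqt grmult}, and then applying \eqref{eq:Kqt-}. The bookkeeping with the $2$-powers matches the paper's computation exactly.
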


\begin{proof}
The identity \eqref{eq:Kqt-} follows by applying the map $\varphi$ to \eqref{eq:qtK}
and using \eqref{eqn:img.Schur} and \eqref{eqn:SlaQxi}.

Suppose $\xi\in\mc{SP}_n$ and $\mu\in\mc P_n$.
We compute by Lemma \ref{lem:ind.Specht} and \eqref{eq:Kqt grmult} that
\begin{align*}
C^-_{\xi\mu}(q,t) &=\sum_{i, j\geq 0}q^j t^i  \dim\Hom_{\HC} (D^{\xi},
{\rm ind}^{\HC}_{\C S_n} R^{i,j}_{\mu})
 \\
 &=\sum_{\la\in\mc P_n}  K_{\la\mu} (q,t^{-1})t^{n(\mu)} \dim\Hom_{\HC}(D^{\xi},
{\rm ind}^{\HC}_{\C S_n} S^\la)
 \\
&=2^{-\frac{\ell(\xi)-\delta(\xi)}{2}} \sum_{\la\in\mc  P_n}
b_{\xi\la} K_{\la\mu} (q,t^{-1})t^{n(\mu)}.
\end{align*}
The identity \eqref{eq:Kqt-mult}  follows from this and \eqref{eq:Kqt-}.
\end{proof}
Note that $K^-_{\xi\mu} (0,t) =K^-_{\xi\mu} (t)$, and
$C^-_{\xi\mu}(1,1) =2^{- \frac{\ell(\xi)-\delta(\xi)}{2}}
K^-_{\xi\mu} (1,1)$ is the degree of $D^\xi$. We leave it to the
reader to formulate further properties of spin Macdonald polynomials
and spin $q,t$-Kostka polynomials.

\subsection{Discussions and open questions}

Let $\{\widehat{H}_{\xi}(x;t)~|~\xi\in\mc{SP}\}$ be the basis dual
to  $\{H^-_{\xi}(x;t)~|~\xi\in\mc{SP}\}$ in $\Gamma$ with respect to
the inner product $\langle
Q_{\xi},Q_{\zeta}\rangle=2^{\ell(\xi)}\delta_{\xi\zeta}$ for $\xi,
\zeta\in\mc{SP}$, or equivalently,
\begin{align}
\prod_{i,j}\frac{1+x_iy_j}{1-x_iy_j}
=\sum_{\xi\in\mc{SP}}H^-_{\xi}(x;t)\widehat{H}_{\xi}(y;t).\label{eq:spin cauchy}
\end{align}
It follows that
\begin{align*}
\prod_{i,j}\frac{1+x_iy_j}{1-x_iy_j}\frac{1-tx_iy_j}{1+tx_iy_j}
&=\sum_{\xi\in\mc{SP}}H^-_{\xi}(x(1-t);t)\widehat{H}_{\xi}(y;t)\\
&=\sum_{\xi\in\mc{SP}}H^-_{\xi}(x;t)\widehat{H}_{\xi}(y(1-t);t).
\end{align*}
However, the spin analogue of the relation (\ref{eqn:Hall-Littlewood
PH}), i.e., a similar relation for  $H^-_{\xi}(x;t)$ and its dual
basis $\widehat{H}_{\xi}(x;t)$ does not hold, as can be shown by
examples for $n=3$.
%
%By~(\ref{eq:spin cauchy}), one can deduce that
%\begin{align*}
%\widehat{H}_{(1)}(x;t)&=\frac{1}{2}q_1(x), \quad
%\widehat{H}_{(2)}(x;t)=\frac{1}{4}q_2(x),\\
%\widehat{H}_{(3)}(x;t)&=\frac{t+2}{2}q_3(x)-\frac{t+1}{4}q_{(2,1)}(x),\\
%\widehat{H}_{(2,1)}(x;t)&=-\frac{1}{2}q_3(x)+\frac{1}{4}q_{(2,1)}(x).
%\end{align*}
%Meanwhile, by the fact that $H^-_{(n)}(x;t)=Q_{(n)}(x)=q_n(x)$ we obtain
%\begin{align*}
%H_{(1)}((1-t)x;t)&=(1-t)q_1(x),\quad
%H_{(2)}((1-t)x;t)=(1-t)^2q_2(x),\\
%H_{(3)}((1-t)x;t)&=(1-t^3)q_3(x)+(t^2-t)q_{(2,1)}(x),\\
%H_{(2,1)}((1-t)x;t)&=(t-1)(1-t^3)q_3(x)+(-3t^3+10t^2-11t+4)q_{(2,1)}(x).
%\end{align*}
%So $\widehat{H}_{\xi}(x;t)$ is not proportional to $H^-_{\xi}((1-t)x;t)$.

In the case $\xi=(n)$, we have
\begin{align*}
H^-_{(n)}((1-t)x;t)=q_n((1-t)x)=(-1)^ng_n(x;t),
\end{align*}
where  $g_n(x;t)$ is defined by
\begin{align*}
\sum_{r\ge 0} g_r(x;t) u^r = \prod_{i} \frac{1 -ux_i}{1
+ux_i}\cdot \frac{1 +tux_i}{1 -tux_i}.
\end{align*}
Curiously the function $g_n(x;t)$ also appears in our calculation of
characters of Hecke-Clifford algebra in \cite{WW2}.

According to Lascoux and Sch\"utzenberger \cite{LS}, the Kostka
polynomial $K_{\la\mu}(t)$  has an interpretation in terms of the
charge of semistandard tableaux of shape $\la$ and weight $\mu$.
This naturally leads to the following.

\begin{question}
Let $\xi\in\mc{SP}, \mu\in\mc P$ with $|\xi| =|\mu|$. Find a
statistics {\em spin charge} on marked shifted tableaux, denoted by
$\text{sch}(T)$, such that $K^-_{\xi\mu}(t) =\sum_{T}
t^{\text{sch}(T)}$ where the summation is taken over all marked
shifted $\xi$-tableaux $T$ of weight $\mu$. The spin charge is
expected to be independent of the marks on the diagonal of a shifted
tableau to account for the factor $2^{\ell(\xi)}$ for
$K^-_{\xi\mu}(t)$.
\end{question}

A possible approach toward spin charge would be using the quantum
affine queer algebra introduced by Chen and Guay \cite{CG}.

\begin{example}   \label{ex:34}
The positive integer polynomials $2^{-\ell(\xi)}K^-_{\xi\mu}(t)$ for
$n=3,4$ are listed in matrix form as follows.
\begin{align*}
(n=3) \quad \left[ \begin{array}{cccc}
 \xi\backslash\mu     & (3) & (2,1) & (1^3)\\
(3)   & 1 & 1+t & 1+t+t^2+t^3 \\
(2,1) & 0 & 1 & t+t^2 \end{array} \right].
\end{align*}

%\vspace*{0.1in}

\begin{align*}
(n=4) \quad \left[ \begin{array}{cccccc}
\xi\backslash\mu    & (4) & (3,1) & (2,2)&(2,1,1)&(1^4)\\
(4)   & 1 & 1+t & t+t^2 &1+t+t^2+t^3&1+t+t^2+2t^3+t^4+t^5+t^6 \\
(3,1) & 0 & 1 & 1+t & 1+2t+t^2&t+2t^2+2t^3+2t^4+t^5\end{array}
\right].
\end{align*}
These examples show that the spin Kostka polynomials given in  this
paper and those by Tudose and Zabrocki  \cite{TZ} using vertex
operators are not the same.
\end{example}

\begin{question}
Does there exist a vertex operator interpretation
for our version of spin Hall-Littlewood polynomials?
\end{question}

Recall that the Kostka polynomial $K_{\la\mu}(t)$ for $\la,
\mu\in\mc P$ is symmetric in the sense that
$$
K_{\la\mu}(t)=t^{m_{\la\mu}}K_{\la\mu}(t^{-1})
$$
for some $m_{\la\mu}\in\Z$. Example~\ref{ex:34} seems to indicate
such a symmetry property for the spin Kostka polynomials as well, as
Bruce Sagan suggested to us.
\begin{question}
Does there exist $m^-_{\xi\mu}\in\Z$ so that the spin Koskta
polynomial $K^-_{\xi\mu}(t)$ for $\xi\in\mc{SP}, \mu\in\mc P$
satisfies
$$
K^-_{\xi\mu}(t)=t^{m^-_{\xi\mu}}K^-_{\xi\mu}(t^{-1})?
$$
\end{question}

Finally, the spin $q,t$-analogue deserves to be further studied.
\begin{question}
Develop systematically a combinatorial theory for the spin Macdonald
polynomials and spin $q,t$-Kostka polynomials.
\end{question}


\begin{thebibliography}{ABC}

\bibitem[Br]{Br}  R.~Brylinski,
{\em Limits of weight spaces, Lusztig's q-analogs, and fiberings of
adjoint orbits}, J. Amer. Math. Soc.  {\bf 2}  (1989),  517--533.

\bibitem[CG]{CG} H.~ Chen and N.~Guay,
{\em Twisted affine Lie superalgebra of type $Q$ and quantization of
its enveloping superalgebra},  Math. Z., to appear.

%\bibitem[DP]{DP}  C.~ De Concini and C.~ Procesi,
%{\em Symmetric functions, conjugacy classes and the flag variety},
%Invent. Math.  {\bf 64}  (1981), 203--219.

\bibitem[DLT]{DLT} J.~ D\'esarm\'enien, B.~  Leclerc and J.~ Thibon,
{\em Hall-Littlewood functions and Kostka-Foulkes polynomials in representation theory},
S\'em. Lothar. Combin. 32 (1994), Art. B32c, approx. 38 pp.

\bibitem[GH]{GH} A.~ Garsia and M.~Haiman,
{\em A graded representation module for Macdonald's polynomials},
Proc. Natl. Acad. Sci. USA {\bf 90} (1993), 3607--3610.


\bibitem[GP]{GP} A.~ Garsia and C. Procesi,
{\em On certain graded $S_n$-modules and the $q$-Kostka
polynomials}, Adv. Math. {\bf 94} (1992), 82--138.

\bibitem[Hai]{Hai}  M.~Haiman,
{\em Hilbert schemes, polygraphs, and the Macdonald positivity conjecture},
J. Amer. Math. Soc.  {\bf 14} (2001), 941--1006.

\bibitem[Ji]{Ji}  N.~ Jing,
{\em Vertex operators and Hall-Littlewood symmetric functions}, Adv.
Math. {\bf 87} (1991),  226--248.

\bibitem[Jo]{Jo}  T.~J\'ozefiak,
{\em A class of projective representations of hyperoctahedral
groups and Schur Q-functions}, Topics in Algebra, Banach Center
Publ. {\bf 26}, Part~2, PWN-Polish Scientific Publishers, Warsaw
(1990), 317--326.

\bibitem[Ka]{Ka} S. ~ Kato,
{\em Spherical functions and a $q$-analogue of Kostant's weight multiplicity formula},
Invent. Math. {\bf 66} (1982), 461--468.

\bibitem[Kle]{Kle}   A. ~ Kleshchev,
{\em Linear and Projective Representations of Symmetric Groups},
Cambridge University Press, 2005.

%\bibitem[LLT]{LLT}
%A. Lascoux, B.  Leclerc and J.  Thibon,
%{\em Fonctions de Hall-Littlewood et polyn\^omes de Kostka-Foulkes aux racines de l'unit\'e},
%C. R. Acad. Sci. Paris S\'er. I Math. {\bf 316} (1993),  1--6.

\bibitem[LS]{LS} A.~ Lascoux and M.~ Sch\"utzenberger,
{\em Sur une conjecture de H.O. Foulkes}, C.~ R. Acad. Sci. Paris
S\'er. A-B {\bf 286} (1978),  A323--A324.

\bibitem[Lu]{Lu} G.~ Lusztig,
{\em Singularities, character formulas, and a $q$-analog of weight multiplicities},
 Analysis and topology on singular spaces, II, III (Luminy, 1981),
 Ast\'erisque {\bf 101--102} (1983), 208--229.

\bibitem[Mac]{Mac}  I.G.~ Macdonald,
{\em Symmetric functions and Hall polynomials}, Second edition,
Clarendon Press, Oxford, 1995.

\bibitem[Sa]{Sa} B.~  Sagan,
{\em Shifted tableaux, Schur $Q$-functions, and a conjecture of R.
Stanley}, J. Combin. Theory Ser. A {\bf 45} (1981), 62--103.

\bibitem[Sch]{Sch}  I.~Schur,
{\"Uber die Darstellung der symmetrischen und der alternierenden
Gruppe durch gebrochene lineare Substitutionen}, J. Reine Angew.
Math. {\bf 139} (1911), 155--250.

\bibitem[Se]{Se} A.~ Sergeev,
{\em Tensor algebra of the identity representation as a module
 over the Lie superalgebras $GL(n,m)$ and $Q(n)$}, Math. USSR Sbornik {\bf 51} (1985), 419--427.

\bibitem[SZ]{SZ} M.~Shimozono and M.~Zabrocki,
{\em Hall-Littlewood vertex operators and generalized Kostka
polynomials}, Adv. Math. {\bf 158} (2001), 66-–85.

\bibitem[Sp]{Sp}  T.~  Springer,
{\em Trigonometrical sums, Green functions of finite groups and representations of Weyl groups},
Invent. Math. {\bf 36} (1976), 173--207.

\bibitem[St]{St} J.~ Stembridge,
{\em Shifted tableaux and the projective representations of
symmetric groups},
 Adv. Math. {\bf 74} (1989), 87--134.

\bibitem[TZ]{TZ}  G.~ Tudose and M.~ Zabrocki,
{\em A $q$-analog of Schur's $Q$-functions}, In: Algebraic combinatorics
and quantum groups, 135--161, World Sci. Publ., River Edge, NJ,
2003, arXiv:math/0203046.

\bibitem[WW1]{WW} J.~ Wan and W.~Wang,
{\em Spin invariant theory for the symmetric group},
 J. Pure Appl. Algebra {\bf 215} (2011), 1569--1581.

\bibitem[WW2]{WW2} J.~Wan and W.~Wang,
{\em Frobenius character formula and spin generic degrees for Hecke-Clifford algebra},
 arXiv:1201.2457, 2012.

%\bibitem[Yo]{Yo} Y. You,
%{\em Polynomial solutions of the BKP hierarchy and projective representations of symmetric groups},
% Infinite-dimensional Lie algebras and groups (Luminy-Marseille, 1988), 449--464,
%Adv. Ser. Math. Phys. {\bf 7}, World Sci. Publ., Teaneck, NJ, 1989.

\bibitem[Ze]{Ze} A.~ Zelevinsky,
{\em Representations of finite classical
groups. A Hopf algebra approach}, Lect. Notes in Math. {\bf 869},
Springer-Verlag, Berlin-New York, 1981.

\end{thebibliography}
\end{document}